\newtheorem{defi}{\bf Definition}
\newtheorem{teo}{\bf Theorem}[section]
\newtheorem{lema}{\bf Lemma}[section]
\newtheorem{prop}{\bf Proposition}[section]
\newtheorem{cor}{\bf Corollary}[section]
\newtheorem{obs}{\bf Remark}[section]
\newtheorem{afir}{\bf Claim}
\newcommand{\La}{\Lambda}
\newcommand{\N}{\mathbb N}
\newcommand{\Z}{\mathbb Z}
\newcommand{\la}{\lambda}
\newcommand{\al}{\alpha}
\newcommand{\DD}[3]{\|Df^{#1}_{/E({#2})}\|\cdot\|Df^{-#1}_{/F(f^{#1}({#2}))}\|^{#3}}
\newcommand{\e}{\epsilon}
\newcommand{\ga}{\gamma}
\def\ZZ{\mathbb{Z}}
\newcommand{\emp}{\emptyset}
\newcommand{\noi}{\noindent}
\begin{document}

\title[Dynamics of Benedicks-Carleson toy models]{Axiom A versus Newhouse phenomena for Benedicks-Carleson toy models \\ (Axiome A versus ph\'enom\`ene de Newhouse pour les mod\`eles jouets de Benedicks-Carleson)}

\author{Carlos Matheus}
\address{Carlos Matheus: Universit\'e Paris 13, Sorbonne Paris Cit\'e, LAGA, CNRS (UMR 7539), F-93439, Villetaneuse, France.}
\email{matheus@impa.br.}
\urladdr{http://www.impa.br/$\sim$cmateus}

\author{Carlos G. Moreira} 
\address{Carlos G. Moreira: Instituto de Matem\'atica Pura e Aplicada (IMPA), Estrada D. Castorina, 110, J. Bot\^anico, 22460-320, Rio de Janeiro, Brazil.}
\email{gugu@impa.br.}
\urladdr{http://www.impa.br/$\sim$gugu}

\author{Enrique R. Pujals}
\address{Enrique Pujals: Instituto de Matem\'atica Pura e Aplicada (IMPA), Estrada D. Castorina, 110, J. Bot\^anico, 22460-320, Rio de Janeiro, Brazil.}
\email{enrique@impa.br.}
\urladdr{http://www.impa.br/$\sim$enrique}

\begin{abstract} We consider a family of planar systems introduced in 1991 by Benedicks and Carleson as a toy model for the dynamics of the so-called H\'enon maps. We show that Smale's Axiom A property is $C^1$-dense among the systems in this family, despite the existence of $C^2$-open subsets (closely related to the so-called Newhouse phenomena) where Smale's Axiom A is violated. In particular, this provides some evidence towards Smale's conjecture that Axiom A is a $C^1$-dense property among surface diffeomorphisms.

The basic tools in the proof of this result are: 1) a recent theorem of Moreira saying that stable intersections of dynamical Cantor sets (one of the main obstructions to Axiom A property for surface diffeomorphisms) can be destroyed by $C^1$-perturbations; 2) the good geometry of the dynamical critical set (in the sense of Rodriguez-Hertz and Pujals) thanks to the particular form of Benedicks-Carleson toy models.
\end{abstract}

\subjclass[2000]{Primary: 37D40 (Dynamical systems of geometric origin and hyperbolicity); Secondary: 37D20 (Uniformly hyperbolic systems)} 

\keywords{Axiom A, Newhouse phenomena, Benedicks-Carleson toy models, H\'enon maps, dynamical critical points, stable intersections of dynamical Cantor sets, two-dimensional dynamical systems. \\ .\hspace{0.2cm} \emph{Mots-cl\'es.} Axiome A, ph\'enom\`ene de Newhouse, mod\`eles jouets de Benedicks-Carleson, applications d'H\'enon, points critiques dynamiques, intersections stables des ensembles de Cantor dynamiques, syst\`emes dynamiques en dimension deux.}

\date{May 14, 2013}

\bibliographystyle{alpha}

\maketitle

\vspace{-0.9cm}

\selectlanguage{francais}
\begin{abstract} Nous consid\'erons une famille de syst\`emes introduite en 1991 par Benedicks et Carleson comme un mod\`ele jouet pour la dynamique des applications d'H\'enon. Nous montrons que l'axiome A de Smale est une propri\'et\'e $C^1$-dense parmi les syst\`emes dans cette famille, m\^eme si nous trouvons aussi des ensembles $C^2$-ouverts (li\'es au ph\'enom\`ene de Newhouse) o\`u l'axiome A de Smale n'est pas satisfait. En particulier, notre r\'esultat soutient la conjecture de Smale selon laquelle l'axiome A est une propri\'et\'e $C^1$-dense parmi les diff\'eomorphismes de surfaces. 

Les outils utilis\'es dans la preuve de notre r\'esultat sont: 1) un th\'eor\`eme r\'ecent de Moreira qui dit que  les intersections stables des ensembles de Cantor dynamiques (une des obstructions majeures \`a l'axiome A pour les diff\'eomorphismes de surfaces) peuvent \^etre enlev\'ees par des perturbations $C^1$-petites; 2) la bonne g\'eom\'etrie de l'ensemble de points critiques dynamiques (au sens de Rodriguez-Hertz et Pujals) due \`a la forme particuli\`ere des mod\`eles jouets de Benedicks-Carleson. 
\end{abstract}
\selectlanguage{english}

%\vspace{0.50cm}

\section{Introduction}

Uniform hyperbolicity (Smale's Axiom A property) has been a long standing paradigm of complete
dynamical description: any dynamical system such that the tangent bundle over
its limit set (i.e., the set of accumulation points of all orbits) splits into two
complementary subbundles which are uniformly forward (respectively
backward) contracted by the tangent map can be completely described
from a geometrical and topological point of view.

Nevertheless, uniform hyperbolicity is a property less universal
than it was initially thought: {\em there are non-empty open sets in the space
of dynamics containing only non-hyperbolic systems.} Actually, Newhouse showed
that for smooth surface diffeomorphisms, the unfolding of a {\em
homoclinic tangency} (a non transversal intersection of stable and
unstable manifolds of a periodic point) generates  non-empty open sets of
diffeomorphisms whose  limit sets are non-hyperbolic (see
\cite{N1}, \cite{N2}, \cite {N3}).

%To explain his construction, firstly we recall that the stable and
%unstable sets 
%$$W^s(p)=\{y\in M:\;dist(f^n(y),f^n(p))\to 0 \mbox{ as } n\to \infty\},$$
%$$W^u(p)=\{y\in M:\;dist(f^n(y),f^n(p))\to 0 \mbox{ as } n\to -\infty\}$$
%are $C^r$-injectively immersed submanifolds when $p$ is a
%hyperbolic periodic point of $f$.

%\index{homoclinic tangency}
%\begin{defi}
%\label{deftang} Let $f:M\to M$ be a diffeomorphism. We say that
%$f$ exhibits a homoclinic tangency if there is a hyperbolic
%periodic point $p$ of $f$ such that the stable and unstable
%manifolds of $p$ have a non-transverse intersection.
%\end{defi}

It is important to say that a homoclinic tangency is (locally)
easily destroyed by small perturbation of the invariant manifolds.
To get open sets of diffeomorphisms with persistent homoclinic
tangencies, Newhouse considers certain systems where the homoclinic
tangency is associated to an invariant hyperbolic set with large
fractal dimension. In particular, he studied the intersection of the local
stable and unstable manifolds of a hyperbolic set (for instance, a
classical horseshoe), which, roughly speaking, can be visualized as
a product of two Cantor sets whose thicknesses are large. Newhouse's
construction depends on how this fractal invariant varies with
perturbations of the dynamics, and actually this is the main reason
that his construction works in the $C^2-$topology. In fact, Newhouse
argument is based on the continuous dependence of the thickness with
respect to $C^2$ perturbations. A similar construction in the
$C^1-$topology leading to same phenomena is unknown (indeed, some
results in the \emph{opposite} direction can be found in \cite{U} and \cite{M}).
In this setting, denoting by $\textrm{Diff}^r(M^n)$ the set of $C^r$-diffeomorphisms of a 
compact $n$-dimensional manifold $M^n$ (without boundary), it was \emph{implicitly} conjectured by Smale (cf. \cite{Smale}, Problems (6.10), item (a), at page 779) that

\begin{center}
{\it Axiom A surface diffeomorphisms are $C^1$ open and dense in} $\textrm{Diff}^1(M^2)$.
\end{center}
This question is explicitly called \emph{Smale's conjecture} in \cite{ABCD}.

In the present paper, we consider a special set of maps acting on a
two dimensional rectangle, firstly introduced by Benedicks and Carleson as a \emph{toy model} for the so-called \emph{H\'enon maps}. 
For this special type of systems, we show
that, if one deals with $C^2-$topology, there are non-empty  open sets of
diffeomorphisms which are not hyperbolic, while in the
$C^1-$topology, the Axiom A property is open and dense.

Before proceeding further, let us briefly recall some features of H\'enon maps and Benedicks-Carleson toy models. 

A typical family where the Newhouse's phenomena hold is the so
called H\'enon maps. In fact, it was proved in \cite{U2} that,
for certain parameter of this family, the unfolding of a tangency
leads to a non-empty open set of non-hyperbolic diffeomorphisms. 

On the other hand, numerical simulations indicate that the attractor of the H\'enon map
(i.e., the closure of the unstable manifold of its fixed saddle
point) has the structure of the product of a line segment and a
Cantor set with small dimension (when a certain parameter $b$ is
close to zero). Although it is a great oversimplification (and many
of the later difficulties on the analysis of H\'enon attractors arise
because of the roughness of such approximation), this idea gives a
very good understanding of the geometry of the H\'enon map. As a guide
to what follows, it is worth to point out that Benedicks and
Carleson~\cite[Section 3, p.~89]{BC} have constructed a model where the point moves on a pure
product space $( - 1 , 1) \times  \mathcal{K}$ where $\mathcal{K}$ is the Cantor set
obtained by repeated iteration of the division proportions $(b,
1-2b, b)$ (i.e., $\mathcal{K}=\bigcap\limits_{n\geq0} A^{-n}([0,b]\cup [1-b,1])$ where $A|_{[0,b]}(x)=x/b$ and $A|_{[1-b,1]}(x)=(b-x)/b$), and the dynamics on $(-1,1)$ is given by a family of
quadratic maps: in fact, the dynamical system on $(-1,1)$ acts as a
movement on a fan of lines, where each line has its own
$x$-evolution, while it is contracted in the $y$-direction (see
Figure 1).

More precisely, consider a one parameter family $\{f(x,y)\}_{y\in
[0,1]}$ (here $x$ is the variable and $y$ is the parameter) such that, for each fixed parameter $y\in [0,1]$, $$f(.,y):[-1,1]\to [-1,1]$$ is a $C^r$-unimodal map (with respect
to the variable $x$) verifying that $0$ is the critical point and $f(0,y)$ is the maximum value of $f(.,y)$
for all $y\in[0,1]$. We denote by ${\mathcal U}^r$ the set of families of
$C^r$-unimodal maps satisfying the conditions stated above.

Let $k:[0,a]\cup [b,1]\to [0,1]$ be a $C^r$ function such that
$k(0)=0=k(1),\, k(a)=1=k(b)$ and $|k'|>\ga>1$. Put
$$K(x,y) = \left\{ \begin{array}{ll}
         K_+(y) & \mbox{if}\,\, x > 0,\\
        K_-(y) & \mbox{if}\,\, x < 0,\end{array} \right. $$
where $K_+=(k_{/[0,a]})^{-1}, K_-=(k_{/[b,1]})^{-1}$.

The bulk of this article is the study of the dynamics of Benedicks-Carleson toy models
$F: ([-1,1]\setminus\{0\})\times[0,1]\to [-1,1]\times [0,1]$ given
by
\begin{equation}\label{e.F}
F(x,y)=(f(x,y), K(x,y))=(f(x,y), K_{sgn(x)}(y)).
\end{equation}

\begin{figure}[!htb]
\begin{center}
%\epsfysize=50mm
%\epsfbox{hen-1-2.eps}
\includegraphics[scale=0.5]{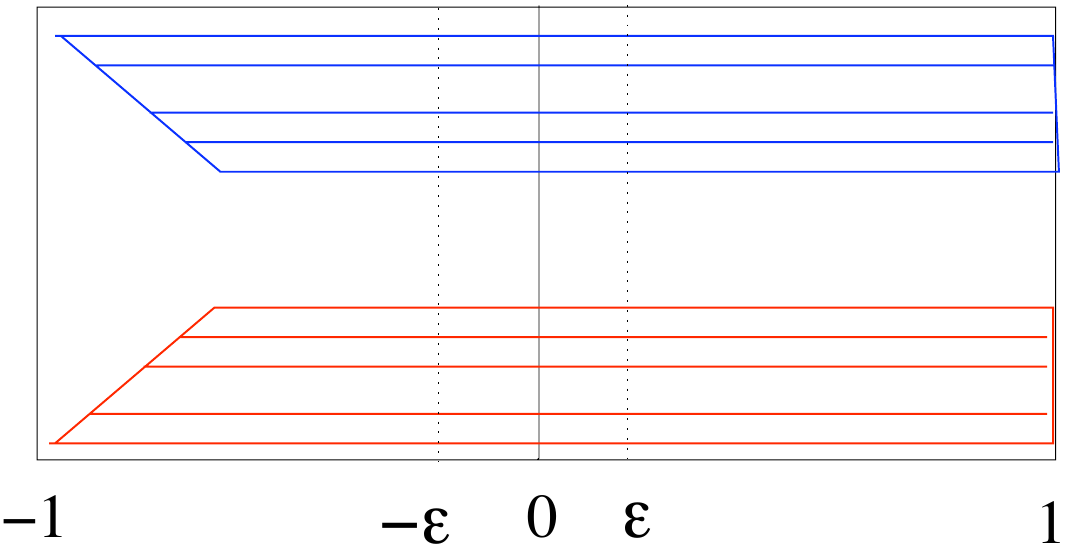}
\end{center}
\caption{Dynamics of $F(x,y)=(1-a(y)x^2, K_{sgn(x)}(y))$ with $a:[0,1]\to (0,2]$.}
\label{henon}
\end{figure}

\vskip 3pt

We denote by ${\mathcal D}^r$ the set of such maps $F$ (with $f(.,y)\in {\mathcal U}^r$ and $k\in C^r$) endowed with the $C^r$-topology. 
Since the line $\{x=0\}$ is a discontinuity line of any
$F\in {\mathcal D}^r$, the maps $F$ are $C^r$-diffeomorphisms only
on $([-1,1]-\{0\})\times [0,1]$, and we will
explain in Section \ref{s.preliminaries} the exact definition of the $C^r$-topology.
 Although this is not specially hard to do, we prefer to postpone it  (where we also revise the notion of hyperbolic sets of $F$) to avoid 
the appearance of unnecessary technicalities in this introductory section.

At this point, we are ready to state our main results:

\vskip 3pt

\noi {\bf Theorem A. } {\em  For $r\geq 2,$ there exists a non-empty open set
${\mathcal N}\subset {\mathcal D}^r$ such that no $F\in {\mathcal N}$ is Axiom A. Moreover, there exists a residual set $\mathcal{R}\subset\mathcal{N}$ 
such that any $F\in\mathcal{R}$ has infinitely many periodic sinks.}

\vskip 3pt

On the other hand, in the $C^1-$topology, the opposite statement
holds:

\vskip 3pt

\noi {\bf Theorem B. }{\em There exists an open and dense set ${\mathcal
V}\subset {\mathcal D}^1$ such that every $F\in {\mathcal V}$ is Axiom A.}

\vskip 3pt

Concerning the proof of these results, a fundamental role will be
played by certain points in the line $\{x=0\}$:

\begin{defi}\label{d.critical-point}Given $F\in {\mathcal D}^r$, consider $k:[0,a]\cup [b,1]\to
[0,1]$ the Cantor map related to $F$ and denote by $\mathcal{K}_0$ the Cantor
set induced by $k$. For any $y\in \mathcal{K}_0$, we call
$$c_y^{\pm} = (0^{\pm},y)$$
a \emph{critical point} of $F$.
\end{defi}  

From the technical point of view, it is important to introduce the points $c_y^{\pm}=(0^{\pm},y)$ because we can extend $F$ to them (via the formula $F(0^{\pm},y)=(f(0,y), K_{\pm}(y))$), so that $F$ becomes defined on a compact set. Of course, we have to pay the price that this extension of $F$ is no longer continuous. In particular, let us make a few comments about the orbits and the meaning of the non-wandering set $\Omega(F)$ of this extension of $F$. While the orbits do not intersect the line 
$\{x=0\}$, we have nothing to say. On the other hand, when the iterate $(0,y)=F(z,w)$ of a point $(z,w)\in([-1,1]-\{0\})\times [0,1]$ hits the line $\{x=0\}$, we will consider that \emph{both} points $(0^{\pm},y)$ make part of the orbit of $(z,w)$. Finally, we say that a point $(z,w)$ is non-wandering when any neighborhood $U$ of $(z,w)$ has some iterate $F^n(U)$ such that $F^n(U)\cap U\neq\emptyset$. Here, a small neighborhood of a point $(z,w)\in([-1,1]-\{0\})\times [0,1]$ is a small standard (Euclidean) neighborhood, while a small neighborhood of the point $(0^+,y)$, resp., $(0^-,y)$, is a ``half-neighborhood'' obtained from the intersection of $[0,1]\times [0,1]$, resp. $[-1,0]\times [0,1]$, with a small standard (Euclidean) neighborhood of $(0,y)$. 

The relevance of the concept of critical point becomes clear from the following
simple (but conceptually important) remark:

\begin{obs}\label{tang}It follows from the definition that, if $c_y^{\pm}\in \Omega(F)$ and $c_y^{\pm}$ is not a periodic sink,
then $\Omega(F)$ is not hyperbolic in the sense of Definition \ref{d.hyperbolic-set} below. This fact should be compared to the notion of \emph{dynamical critical points} of~\cite{PRH} and its role as the obstruction to the presence of hyperbolicity/domination in dissipative compact invariant sets of surface diffeomorphisms.
\end{obs}

Closing this introduction, we give the organization of the paper:

\begin{itemize}
\item In Section~\ref{s.A}, we follow the same ideas of Newhouse to construct a $C^2$-open set $\mathcal{N}$ where the critical points can not be removed from the limit set, so that the proof of Theorem A can be derived from the combination of this fact and Remark~\ref{tang}.
\item In Section~\ref{s.B}, the proof of Theorem B is presented. Morally speaking, our basic idea is inspired by a proof of Jakobson's theorem~\cite{J} (of $C^1$-density of hyperbolicity among unimodal maps of the interval) along the lines sketched in the book of de Melo and van Strien~\cite{dMvS}: namely, in the one-dimensional setting, one combines Ma\~n\'e's theorem~\cite{M1} (giving the hyperbolicity of compact invariant sets far away from critical points of a $C^2$ Kupka-Smale interval map) with an appropriate $C^1$-perturbation to force the critical point to fall into the basin of a periodic sink. In our two-dimensional setting, we start by showing that the points
of the limit set staying away from the critical line $\{x=0\}$
belong to a hyperbolic set; this is done by proving that any
compact set disjoint from the critical line exhibits a dominated
splitting and then by using Theorem B in \cite{PS1} (which is the two-dimensional generalization of Ma\~n\'e's theorem~\cite{M1}) to
conclude hyperbolicity. Next, we exploit a recent theorem of
Moreira~\cite{M} about the non-existence of $C^1$-stable
intersections of Cantor sets plus the geometry of the maps $F\in
{\mathcal D}^1$ to prove a dichotomy for the critical points of a
generic $F$: either critical points fall into the basins of a
finite number of periodic sinks or they return to some small
neighborhood of the critical line. Finally, we prove the critical
points returning close enough to the critical line can be absorbed
by the basins of a finite number of periodic sinks after a
$C^1$-perturbation; thus, we conclude that the limit set of a generic
$F\in {\mathcal D}^1$ is the union of an hyperbolic set with a finite
number of periodic sinks, i.e., a generic $F\in {\mathcal D}^1$ is Axiom
A.
\end{itemize}

\noindent\textbf{Acknowledgements.} The authors are thankful to IMPA, Coll\`ege de France and Institut Mittag-Leffler (and their staff) for the excellent ambient during the preparation of this manuscript. Also, we are grateful to Sylvain Crovisier for several discussions (who helped to clarify the arguments below). Moreover, we would like to acknowledge Sylvain Crovisier and Jean-Christophe Yoccoz for their interest in this work and their constant support.

%%%%%%%%%%%%%%%%%%%%%%%%%%%%%%%%%%%%%%%%%%%%%%%%%%%%%%%%%%%%%%%%%%%%%%%%%%%%%%%%
%%%%%%%%%%%%%%%%%%%%%%%%%%%%%%%%%%%%%%%%%%%%%%%%%%%%%%%%%%%%%%%%%%%%%%%%%%%%%%%%
%%%%%%%%%%%%%%%%% Preliminaries %%%%%%%%%%%%%%%%%%%%%%%%%%%%%%%%%%%%%%%%%%%%%%%%
%%%%%%%%%%%%%%%%%%%%%%%%%%%%%%%%%%%%%%%%%%%%%%%%%%%%%%%%%%%%%%%%%%%%%%%%%%%%%%%%
%%%%%%%%%%%%%%%%%%%%%%%%%%%%%%%%%%%%%%%%%%%%%%%%%%%%%%%%%%%%%%%%%%%%%%%%%%%%%%%%

\section{Preliminaries}\label{s.preliminaries}

\vskip 3pt

In this (very) short section, we quickly review a few technical notions appearing in the statements of Theorems A and B.

\begin{defi}Given $s\geq r\geq 1$ integers and $F, \widetilde{F} \in {\mathcal D}^s$, consider
$\{f(.,y)\}_{y\in [0,1]}$ and $k:[0,a]\cup [b,1]\to [0,1]$, respectively
$\{\widetilde{f}(.,y)\}_{y\in [0,1]}$ and
$\widetilde{k}:[0,\widetilde{a}]\cup [\widetilde{b},1]\to [0,1]$
 (the functions associated to $F$, respectively $\widetilde{F}$). 

We say that $F$ and $\widetilde{F}$ are $C^r$-close if the one parameter families $\{f(.,y)\}_{y\in
[0,1]}$ and $\{\widetilde{f}(.,y)\}_{y\in [0,1]}$ are $C^r$-close
in the usual manner, $a$ is close to $\widetilde{a}$, $b$ is close
to $\widetilde{b}$, and $k$ is $C^r$-close to $\widetilde{k}$ in the sense that they admit $C^s$-extensions to $[0,\max\{a,\widetilde{a}\}]\cup [\min\{b,\widetilde{b}\},1]$ which are $C^r$-close.
\end{defi}

\begin{defi}\label{d.hyperbolic-set}A set $\La$ is called \emph{hyperbolic} for
$F\in\mathcal{D}^r$ if it is compact, $F$-invariant and there exist a decomposition $\mathbb{R}^2=E^s\oplus
E^u$ invariant under $DF$ and some constants $C>0$, $0<\la<1$ such that
$$|DF^n_{/E^s(x)}|\le C\la^n\,\,\,\mbox{and}\,\,\,|DF^{-n}_{/E^u(x)}|\le
C\la^n\,\,\,\forall x\in \La,\,\,\, n\in \N.$$ 
Here, it is worth to point out that we do not require the dimensions of $E^s$ and $E^u$ to be constant on $\Lambda$ (contrary to some places in the literature). In particular, a hyperbolic set in our context is always the union of a saddle-type set (i.e., a hyperbolic set where $\textrm{dim}(E^s)=\textrm{dim}(E^u)=1$) disjoint from the set of critical points and finitely many periodic sinks.

We say that $F\in\mathcal{D}^r$ is \emph{Axiom A} if the non-wandering set is
hyperbolic and it is the closure of the periodic points. In the
sequel, $\Omega(F)$ denotes the non-wandering set (as defined in the paragraph right after Definition \ref{d.critical-point} above).
\end{defi}
%For more details on the concepts whose definitions were omitted above, we recommend the book~\cite{PT}.

%%%%%%%%%%%%%%%%%%%%%%%%%%%%%%%%%%%%%%%%%%%%%%%%%%%%%%%%%%%%%%%%%%%%%%%%%%%%%%%%%%%%%%%
%%%%%%%%%%%%%%%%%%%%%%%%%%%%%%%%%%%%%%%%%%%%%%%%%%%%%%%%%%%%%%%%%%%%%%%%%%%%%%%%%%%%%%%
%%%%%%%%%%%%%%%%%%%%%%                                                       Newhouse section                                                                  %%%%%%%%%%%%%%%%%%%%%%%%%%%%%%%%%%%%%%%%%%%%%%%%%%%%%%%%%%%%%%%%%%%%%%%%%%%%%%%%%%%%%%%%%%%%%%%%%%%%%%%%%%%%%%%%%%%%%%%%%%%%%%%%%%%%%%%%%%%%%%%%%%%%%%%%%%%%%%%%%%%%%%%%%%%%%%%%%%%%%%%%%%%%%%%%%%%%%

\section{Proof of Theorem A}\label{s.A}

\vskip 3pt

The strategy is similar to the arguments of \cite{N1} (see
also~\cite{PT}).

Given $0<t<1$ and $m\geq m_0=m_0(t)$ (where $m_0(t)$ is a large integer to be chosen later), we define  $\delta_m:=1/(2^m-1)$, $\e_m:=\sin(\pi\delta_m/2)$ and we select a parameter $\rho_m$ such that $1-\cos(\pi\delta_m)< t\rho_m/2< 1-\cos(\pi(1-\delta_m)/2^{m-1})$ (e.g., $\rho_m:=2(1-\cos(3\pi\delta_m/2))/t$ works for $m_0(t)$ sufficiently large). Next, we take $\mu_m: [0,1]\to [0,1]$ a $C^2$-map such that $\mu_m(y)=\mu_m(1-y)$ and $\mu_m(y) = 1-\sqrt{1-\rho_m y/2}$ for every
$y\in [0,\frac{t}{2}]$ and we define
\begin{equation}\label{e.bc}
F^t(x,y)= (f_{\e_m}(x,y),K^t(x,y)),
\end{equation}
with $$K^t(x,y) := \left\{ \begin{array}{ll}
         (k^t_{/[0,\frac{t}{2}]})^{-1}(y) & \mbox{if}\,\, x > 0,\\
         (k^t_{/[1-\frac{t}{2},1]})^{-1}(y) & \mbox{if}\,\, x < 0,\end{array} \right.$$
where $k^t$ is the map
\begin{displaymath}
k^t(y)=\left\{ \begin{array}{ll}
2y/t & \mbox{if}\,\, 0\leq y\leq t/2 ,\\
2(1-y)/t& \mbox{if}\,\, 1-\frac{t}{2}\leq y\leq
1,\end{array} \right.
\end{displaymath}
and $f_{\e_m}(x,y)$ is a $C^2$ family of unimodal maps such that
$$f_{\e_m}(x,y)=\left\{ \begin{array}{ll}
         1-2x^2 & \mbox{if}\,\, |x| \geq \e_m,\\
         1-\mu_m(y) & \mbox{at}\,\, x=0.\end{array} \right.$$
Also, let $\mathcal{K}_0=\mathcal{K}^t_0:=\cap_{n\in \N} (k^t)^{-n}([0,\frac{t}{2}]\cup [1- \frac{t}{2}, 1])$ be the Cantor set induced by $k=k^t$. See Figure 2.

\begin{figure}[!htb]
\begin{center}
%    \epsfxsize=2in
%    \epsffile{cant.eps}
\includegraphics[scale=0.5]{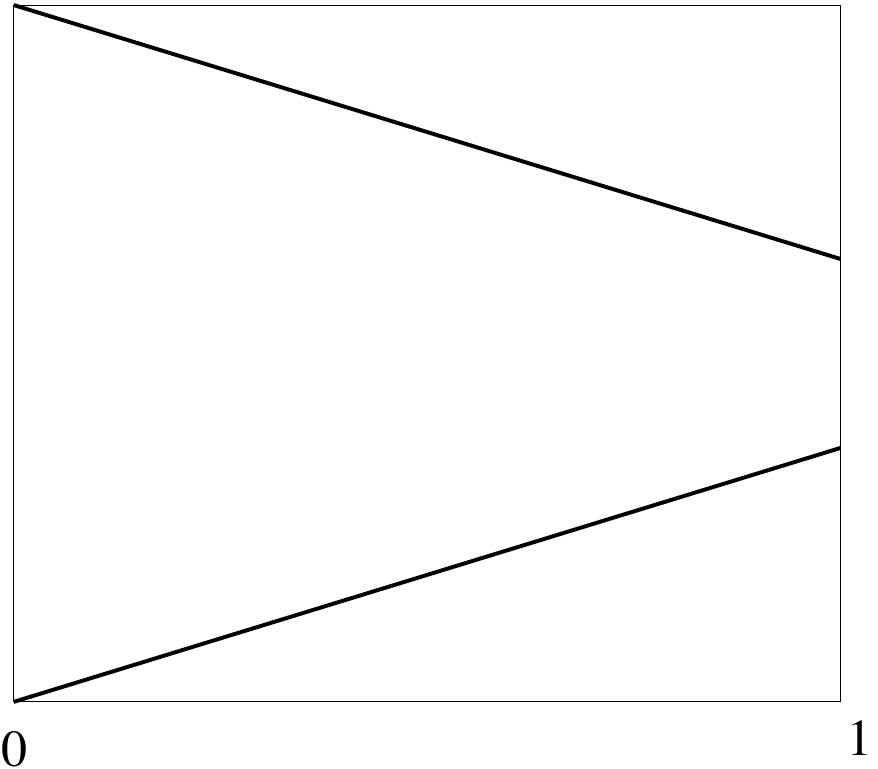}
 \\ [0.4cm]
$y\mapsto \frac{t}{2}y,\,\,x>0;\,\,\, y\mapsto 1-\frac{t}{2}y,\,\,x<0$
\end{center}
\caption{Dynamics of $K^t$.}
\label{figtest-fig}
\end{figure}

To simplify the exposition, firstly we consider the proof of Theorem
A only for maps $F=F^t$ of the form~(\ref{e.bc}). Then, at the end of this
section, we explain how
the general case follows from the previous one.

We begin by recalling some classical facts about dynamically defined Cantor sets and their thicknesses. For a more detailed explanation, see~\cite{PT}.

\begin{defi}
We say that a Cantor set $\mathcal{K}\subset\mathbb{R}$ is
\emph{dynamically defined} if it is the maximal invariant set of a
$C^{1+\al}$ expanding map with respect to a given Markov partition.
\end{defi}

\begin{defi}A \emph{gap} (resp. \emph{bounded gap}) of a Cantor set $\mathcal{K}$ is a
connected component (resp., bounded connected component) of $\mathbb{R} - \mathcal{K}$. Given $U$ a bounded gap of $\mathcal{K}$
and $u\in\partial U$, we call the \emph{bridge} $C$ of $\mathcal{K}$
at $u$ to the maximal interval such that $u\in\partial C$ and $C$
contains no point of a gap $U'$ with $|U'|\geq |U|$. The thickness
of $\mathcal{K}$ at $u$ is $\tau(\mathcal{K},u) = |C|/|U|$ and the
\emph{thickness} $\tau(\mathcal{K})$ of $\mathcal{K}$ is the infimum
over $\tau(\mathcal{K},u)$ for all boundary points $u$ of bounded
gaps.
\end{defi}

\begin{obs}\label{th} For the Cantor sets $\mathcal{K}_0^t$ induced by the maps $k^t$ above, it
is not hard to see that $0<\tau(\mathcal{K}_0^t)=t/2(1-t)<\infty$.
\end{obs}

\begin{obs}\label{tent} The quadratic map $f_2(x):=1-2x^2$ has arbitrarily thick dynamically defined Cantor sets. In fact, using the fact that $1-2x^2$ is conjugated to the complete \emph{tent} map
$$T_2(x):=\left\{ \begin{array}{ll}
         2x & \mbox{if}\,\, 0\leq x\leq 1/2 ,\\
         2-2x & \mbox{if}\,\, 1/2\leq x\leq 1,\end{array} \right.$$
via the \emph{explicit} conjugation $h(x)=-\cos(\pi x)$, we can exhibit thick Cantor sets as follows. Denote by $\widetilde{I}_2^{(m)}:=[h(2\delta_m), h((1-\delta_m)/2^{m-2})]$ and put $\widetilde{I}_i^{(m)}:=f_2(\widetilde{I}_{i-1}^{(m)})$ for $i=3,\dots,m$. As it is explained in Section 2 of Chapter 6 of Palis-Takens book~\cite{PT}, from the explicit nature of the conjugation $h$ and the fact that the intervals $h^{-1}(\widetilde{I}_2^{(m)}),\dots,h^{-1}(\widetilde{I}_m^{(m)})$ form a Markov partition of a dynamically defined Cantor set $\mathcal{K}_m$ of thickness $\tau(\mathcal{K}_m)=2^{m-1}-3$ associated to the tent map $T_2(x)$, it is possible to check that 
$\widetilde{\mathcal{K}}_m:=h(\mathcal{K}_m)$ are dynamically defined Cantor sets associated to $f_2$ (and Markov partition $\widetilde{I}_2^{(m)},\dots,\widetilde{I}_m^{(m)}$) such that $\tau(\widetilde{\mathcal{K}}_m)\to\infty$ (as $m\to\infty$).
\end{obs}

\begin{obs}\label{cont}
Let $\mathcal{K}(\psi)$ be the dynamically defined Cantor set
associated to a $C^{1+\al}$ expanding map $\psi$. If $\phi$ is
$C^{1+\al}$-close to $\psi$, then the thickness of
$\mathcal{K}(\phi)$ is close to the thickness of
$\mathcal{K}(\psi)$. In other words, the thickness of dynamically
defined Cantor sets $\mathcal{K}$ depend continuously on
$\mathcal{K}$ (with respect to the $C^{1+\alpha}$-topology). See~\cite{PT}.
\end{obs}

Now we state Newhouse's \emph{gap lemma} ensuring that two
\emph{linked} Cantor sets with large thicknesses should intersect
somewhere:

\begin{lema}[Gap Lemma~\cite{N1}]\label{gap}Given two Cantor sets $\mathcal{K}_1$ and
$\mathcal{K}_2$ of $\mathbb{R}$ such that
$$\tau(\mathcal{K}_1)\tau(\mathcal{K}_2)>1,$$ then one of the
following possibilities occurs:
\begin{itemize}
\item $\mathcal{K}_1$ is contained in a gap of $\mathcal{K}_2$;
\item $\mathcal{K}_2$ is contained in a gap of $\mathcal{K}_1$;
\item $\mathcal{K}_1\cap \mathcal{K}_2 \neq \emptyset$.
\end{itemize}
\end{lema}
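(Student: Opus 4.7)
The plan is to argue by contradiction: suppose that $\mathcal{K}_1\cap\mathcal{K}_2=\emptyset$ and that neither Cantor set is contained in a gap of the other. Writing $\tau_i:=\tau(\mathcal{K}_i)$, I will derive a contradiction from $\tau_1\tau_2>1$. The first step is to observe that the non-containment plus disjointness hypotheses force the existence of a pair of \emph{linked} bounded gaps, namely $U=(a_1,a_2)$ of $\mathcal{K}_1$ and $V=(b_1,b_2)$ of $\mathcal{K}_2$ with (say) $a_1<b_1<a_2<b_2$, so that $a_2\in V\cap\mathcal{K}_1$ and $b_1\in U\cap\mathcal{K}_2$.

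The heart of the proof is a shrinking step. Given a linked pair $(U,V)$, the assumption $\tau_1\tau_2>1$ rules out simultaneously $|V|>\tau_1|U|$ and $|U|>\tau_2|V|$ (multiplying would give $|U||V|>|U||V|$), so at least one of $|V|\le\tau_1|U|$ or $|U|\le\tau_2|V|$ must hold. Suppose the first. Then the bridge $B$ of $\mathcal{K}_1$ at $a_2$ extending to the right (the side opposite $U$) has length $|B|\ge\tau_1|U|\ge|V|>b_2-a_2$, so $b_2$ lies in the interior of $B$. Since $b_2\in\mathcal{K}_2$ is disjoint from $\mathcal{K}_1$, it must sit in some bounded gap $U'$ of $\mathcal{K}_1$, and the defining property of the bridge forces $|U'|<|U|$. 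A short geometric check (the left endpoint of $U'$ is a point of $\mathcal{K}_1$ strictly between $a_2$ and $b_2$, hence inside $V$) shows that $(U',V)$ is again a linked pair, with $|U'|<|U|$ and $|V|$ unchanged. The case $|U|\le\tau_2|V|$ is handled symmetrically, producing $(U,V')$ with $|V'|<|V|$.

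Iterating this shrinking step yields a sequence of linked pairs $(U_k,V_k)$ in which exactly one of the two gap sizes strictly decreases at each step. Since each Cantor set has only countably many bounded gaps whose sizes accumulate only at $0$, one cannot keep shrinking only one of the two gap-sequences forever: once, say, $|U_k|$ becomes sufficiently small compared to $|V_k|$, the dichotomy forces the next shrink onto the $V$-side. Hence both $|U_k|$ and $|V_k|$ tend to $0$. All the pairs lie in the compact convex hull of $\mathcal{K}_1\cup\mathcal{K}_2$, so extracting a subsequence, the endpoints of $U_k$ and $V_k$ converge to a common point $L$; since the endpoints of $U_k$ lie in the closed set $\mathcal{K}_1$ and those of $V_k$ in the closed set $\mathcal{K}_2$, we conclude $L\in\mathcal{K}_1\cap\mathcal{K}_2$, the desired contradiction.

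The hard part, I expect, is the careful bookkeeping in the iteration: verifying the geometric linking claim in both sub-cases of the dichotomy, and then rigorously extracting an infinite sequence of linked pairs whose gap sizes really do both tend to zero (as opposed to one side stabilizing at a positive value). The single-step shrinking argument is a direct consequence of the thickness hypothesis, but the passage to a common limit point needs a small compactness-and-discreteness argument to seal the contradiction.
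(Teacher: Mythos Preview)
The paper does not supply a proof of this lemma: it is quoted as Newhouse's Gap Lemma with a citation to \cite{N1} (the standard reference being \cite{PT} for a textbook account), and then used as a black box. So there is no in-paper argument to compare your attempt against.

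That said, your argument is exactly the classical one and is essentially correct. Two places deserve a line of extra care. First, the existence of an initial linked pair of \emph{bounded} gaps is itself a small lemma: from $\alpha_1\le\alpha_2$ (leftmost points) and the exclusion of the three alternatives one checks that $\alpha_2$ lies in a bounded gap $U$ of $\mathcal{K}_1$, and then that the right endpoint of $U$ lies in a bounded gap $V$ of $\mathcal{K}_2$ linked with $U$. Second, for the passage to both gap-sizes tending to zero, your sentence ``once $|U_k|$ is small enough the dichotomy forces the other side'' is the right idea but slightly informal; a crisper version is: if the $V$-side stabilized from some step $n$ on, then for every $k\ge n$ we would be in the case $|V_n|\le\tau_1|U_k|$, hence $|U_k|\ge |V_n|/\tau_1>0$, giving an infinite strictly decreasing sequence of gap-sizes of $\mathcal{K}_1$ bounded away from zero --- impossible, since a bounded Cantor set has only finitely many gaps above any positive threshold. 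With these two points spelled out, your proof is complete and matches the argument in the cited references.
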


For later reference, we recall the following definition:

\begin{defi}
We say that $\mathcal{K}_1$ and $\mathcal{K}_2$ are \emph{linked} if their convex hulls 
$I_1$ and $I_2$ are linked in the sense that the interior of $I_2$ contains exactly one boundary point of $I_1$ and vice-versa.
\end{defi}
Observe that the ``linked'' property is robust by perturbations.
After these preliminaries, we can complete the discussion of this section as follows.

\vskip 4pt

\noi {\bf End of the proof of Theorem A:}

\vskip 3pt

We observe that, since $F=F^t$ is the product map $F^t(x,y)=(1-2x^2,K_{sgn(x)}^t(y))$ at the region $([-1,\e_m]\cup[\e_m,1])\times[0,1]$, it follows that $\La_{\e_m} := \widetilde{\mathcal{K}}_m\times \mathcal{K}_0^t$ is a hyperbolic set of $F^t$. Moreover, the stable lamination $W^s(\La_{\e_m})$ is composed by vertical lines passing through $\widetilde{\mathcal{K}}_m\times\{0\}$ and the unstable lamination $W^u(\La_{\e_m})$ is composed by horizontal lines passing through $\{0\}\times \mathcal{K}_0^t$. We divide the construction of $\mathcal{N}$ into three steps.

\vskip 3pt

{\it Step 1:} From Remarks~\ref{th} and~\ref{tent}, given $0<t<1$,
we can choose $m_0(t)\in\mathbb{N}$ large such that, for every $m\geq m_0(t)$, it holds $$\tau(\widetilde{\mathcal{K}}_m)\tau(\mathcal{K}_0^t)>2.$$ \vskip 3pt

{\it Step 2:} Consider the following line segment:
$$L^+ := F^2(\{0^+\}\times [0,t/2])=\{(1-2(1-\mu_m(y))^2, \frac{t^2}{4}y)\}_{y\in [0,t/2]}=\{(-1+\rho_m y, \frac{t^2}{4}y)\}_{y\in [0,t/2]}.$$
In the sequel, $L^+$ plays the role of a \emph{line of tangencies}: more precisely, we introduce $$\tilde{\mathcal{K}^s}=
(f_2^{-1}(\widetilde{I}_2^{(m)}\cap\widetilde{\mathcal{K}}_m)\times [0,1])\cap L^+, \quad \quad \tilde{\mathcal{K}^u}=F^2(\{0^+\}\times (\mathcal{K}^t_0\cap [0,t/2])) = W^u_{loc}(\La_{\e_m})\cap L^+.$$
Here, we used the fact that the critical point $0$ of $f_{\e_m}$ belongs to the interval $\widetilde{I}_m^{(m)}$ of the Markov partition of the dynamical Cantor set $\widetilde{\mathcal{K}}_m$ (and, thus, the preimages of $0$ accumulate $\widetilde{\mathcal{K}}_m$) to get that the critical points $c^{\pm}_y$ belong to $W^u(\Lambda_{\e_m})$.

We claim that $\tilde{\mathcal{K}^s}\cap\tilde{\mathcal{K}^u}\neq\emptyset$. In fact, since the straight line segment $L^+$ is transversal to both horizontal and vertical foliations, and $L^+$ is naturally identified with the interval $[0,t/2]$ of $\mathbb{R}$ via $L^+\ni (-1+\rho_m y, \frac{t^2}{4}y)\mapsto y\in\mathbb{R}$, we obtain that $\tau(\tilde{\mathcal{K}^s}) \geq \tau(\widetilde{\mathcal{K}}_m)/2$ (as the derivative of $f_2$ in the interval $[0,t/2]$ is between $1$ and $2$) and $\tau(\tilde{\mathcal{K}^u})=\tau(\mathcal{K}_0^t)$, so that $\tau(\tilde{\mathcal{K}^s})\tau(\tilde{\mathcal{K}^u})>1$ (by Step 1). Hence, by Newhouse gap lemma~\ref{gap}, it suffices to show that $\tilde{\mathcal{K}^s}$ and $\tilde{\mathcal{K}^u}$ are linked. However, it is not hard to see that this follows from our choice of $\rho_m$. Indeed, from the definitions of $\tilde{\mathcal{K}^s}$ and $\tilde{\mathcal{K}^u}$, we get that $\tilde{\mathcal{K}^s}$ and $\tilde{\mathcal{K}^u}$ are linked if and only if the vertical projection $\overline{\mathcal{K}}^s:= f_2^{-1}(\widetilde{I}_2^{(m)}\cap\widetilde{\mathcal{K}}_m)$ of $\tilde{\mathcal{K}^s}$ is linked to the vertical projection $\overline{\mathcal{K}}^u$ of $\tilde{\mathcal{K}^u}$. On the other hand, the convex hulls of $\overline{\mathcal{K}}^s$ and $\overline{\mathcal{K}}^u$ are linked: more precisely, the convex hull $I^s$ of $\overline{K}^s$ is $f_2^{-1}(\widetilde{I}_2^{(m)}) = [-\cos(\pi\delta_m),-\cos(\pi(1-\delta_m)/2^{m-1})]$ and the convex hull $I^u$ of $\overline{K}^u$ is $[0,-1+t\rho_m/2]$, so that our choice of $\rho_m$ verifying
$$1-\cos(\pi\delta_m)< t\rho_m/2< 1-\cos(\pi(1-\delta_m)/2^{m-1})$$
implies that $I^s$ and $I^u$ are linked.

Next, we notice that $\tilde{\mathcal{K}^s}\cap \tilde{\mathcal{K}^u}\neq\emptyset$ means that $F^2(c_y^+)\in W^s_{loc}(\La_{\e_m})$ for some critical point $c^+_y\in W^u_{loc}(\Lambda_{\e_m})$, $y\in \mathcal{K}_0^t$. Since the hyperbolic set $\Lambda_{\e_m}$ is transitive (i.e., it contains dense orbits), it follows that $c_y^+$ is a non-periodic critical point belonging to the non-wandering set $\Omega(F)$. Therefore, by Remark
\ref{tang}, the set $\Omega(F)$ is not hyperbolic. \vskip 3pt

{\it Step 3:} Finally, we claim that any sufficiently small $C^2$ neighborhood $\mathcal{N}\subset \mathcal{D}^2$ of the map $F=F^t$ constructed above fits the conclusion of the first part of Theorem A. Indeed, this is a consequence of the following known facts for $G$ $C^2$-close to $F$:

\begin{enumerate}
\item The hyperbolic set $\La_{\e_m}$ has a continuation to a hyperbolic set $\La_{\e_m}(G)$ of $G$;
\item The Cantor sets $\tilde K^s$ and $\tilde K^u$ have unique continuations to Cantor sets
$\tilde K^s(G)$ and $\tilde K^u(G)$ obtained by intersecting the local stable and unstable laminations of $\La_{\e_m}(G)$ with the \emph{line of tangencies} $L^+(G)=G^2(\{0^+\}\times [0,1/2])$. Moreover, these Cantor sets are
$C^{1+\al}-$close to $\tilde K^s$ and $\tilde K^u$ respectively in the sense that their vertical projections $\overline{K}^s(G)$ and $\overline{K}^u(G)$ to $\mathbb{R}\times\{0\}$ are $C^{1+\alpha}$-close to the vertical projections $\overline{K}^s$ and $\overline{K}^u$ of $\tilde K^s$ and $\tilde K^u$;
\item Thus, the Cantor sets $\overline{K}^s(G)$ and $\overline{K}^u(G)$ have thicknesses close to the
thicknesses of $\overline{K}^s$ and $\overline{K}^u$ respectively; by continuity of the thickness (see Remark~\ref{cont}), it follows that $\tau(\overline{K}^u(G))\tau(\overline{K}^u(G))>1$;
\item From Newhouse gap lemma \ref{gap} and the fact that $\overline{K}^s(G)$ and $\overline{K}^u(G)$ remain linked, it follows that $\overline{K}^s(G)\cap \overline{K}^u(G)\neq \emp$ and, \emph{a fortiori}, $\tilde K^s(G)\cap \tilde K^u(G)\neq \emp$;
\item Hence, there are (non-periodic) critical points contained
in the non-wandering set of $G$, and so, by Remark~\ref{tang}, it is not hyperbolic.
\end{enumerate}

At this point, it remains only to prove the second part of Theorem A, namely, the existence of a residual set $\mathcal{R}\subset\mathcal{N}$ such that any $F\in\mathcal{R}$ has infinitely many sinks.

Let $\mathcal{N}_n\subset\mathcal{N}$ be the (open) subset of maps $F\in\mathcal{N}$ with $n$ attracting periodic orbits (at least) disjoint from the critical line $\{x=0\}$ and $\mathcal{R}=\bigcap\limits_{n\in\mathbb{N}}\mathcal{N}_n$. In this notation, our task is reduced to show the next proposition.

\begin{prop}\label{inf-prop}$\mathcal{N}_n$ is dense for every $n\in\mathbb{N}$.
\end{prop}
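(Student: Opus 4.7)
The plan is to proceed by induction on $n$. Openness of each $\mathcal{U}_n$ is immediate from the persistence of hyperbolic attracting periodic orbits under $C^2$-small perturbations, so only density must be shown. The base case $n=0$ is trivial, and for the inductive step I would take an arbitrary $F\in\mathcal{U}$ and $\e>0$, use the inductive density of $\mathcal{U}_{n-1}$ to approximate $F$ in $C^2$ by some $F_0\in\mathcal{U}_{n-1}$ carrying hyperbolic sinks $P_1,\dots,P_{n-1}$, and then exhibit an arbitrarily small $C^2$-perturbation $\widetilde F$ of $F_0$ which creates one new attracting periodic orbit while preserving $P_1,\dots,P_{n-1}$.

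To produce the new sink I would exploit the critical tangency built in Step~3 of the proof of Theorem~A: there is some $y_0\in K_0^t$ and some $q\in\Lambda_{\e_m}(F_0)$ with $F_0^2(c^+_{y_0})\in W^s_{loc}(q)$. In fact, since $\tilde K^s(F_0)\cap \tilde K^u(F_0)$ is the intersection of two Cantor sets whose thicknesses multiply to more than $1$, it is uncountable; combined with the density of periodic points in the hyperbolic basic set $\Lambda_{\e_m}(F_0)$, this allows me to choose $y_0$ so that the associated $q$ is periodic of some large period $N$. I would then introduce a $C^2$-small one-parameter family of perturbations $\{F_\alpha\}_{|\alpha|<\alpha_0}$ of $F_0$, obtained by modifying the value $f_y(0)=1-\mu_m(y)$ only in a tiny $y$-neighborhood of $y_0$, in such a way that $\alpha$ translates the image $F_\alpha^2(c^+_{y_0})$ transversally to the local stable lamination of $\Lambda_{\e_m}(F_\alpha)$ along the line of tangencies $L^+$. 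By construction this family unfolds the critical tangency in the way required by the classical Newhouse--Palis--Takens analysis of sinks produced near a homoclinic tangency (cf.\ Chapter~6 of~\cite{PT}).

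A decisive feature simplifying the analysis in the present setting is that the $x$-derivative of $F_\alpha$ vanishes on the critical line $\{x=0\}$, so any periodic orbit produced by the unfolding that passes sufficiently close to that line is automatically (super-)attracting. Concretely, I would select a sequence $\alpha_k\to 0$ for which the orbit of $c^+_{y_0}$ under $F_{\alpha_k}$ shadows the periodic orbit of $q$ during many iterates and then returns close to the critical line; an intermediate-value argument in $\alpha$ then produces parameters $\alpha_k^*$ near $\alpha_k$ at which some critical point $c^+_{y_0'}$, with $y_0'$ near $y_0$, is periodic for $F_{\alpha_k^*}$, yielding a super-attracting periodic orbit. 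Setting $\widetilde F:=F_{\alpha_k^*}$ for $k$ large enough, the sinks $P_1,\dots,P_{n-1}$ survive as hyperbolic attracting periodic orbits by $C^2$-continuity and the new super-sink provides the $n$-th one, so $\widetilde F\in\mathcal{U}_n$.

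The main obstacle I anticipate is precisely the bifurcation-theoretic step of the preceding paragraph: closing up the orbit of $c^+_{y_0}$ via the parameter $\alpha$ to produce a genuine periodic sink of the full diffeomorphism (and not merely of a quadratic-like return map along $L^+$). This is the delicate classical ingredient of Newhouse's residual infinitely-many-sinks theorem. I would handle it by localising the perturbation of $\{f_y\}$ in a sufficiently tiny $y$-neighborhood of $y_0$, so that the shadowing along the orbit of $q$ is not destroyed, and then reducing the creation of a periodic critical orbit to the one-dimensional bifurcation theory of the unimodal family $\{f_y\}$ restricted to the itineraries prescribed by the shadowing, where super-attracting parameters are known to form a dense set.
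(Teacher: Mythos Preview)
Your overall strategy matches the paper's: both proceed by induction, both exploit the critical tangency present for every $F\in\mathcal{U}$, and both create a new super-attracting periodic orbit by arranging for a critical point to be periodic while leaving the previously existing sinks untouched. The difference lies in how the sink is actually produced.

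The paper packages the geometric picture into the notion of a \emph{heteroclinic tangency}: a critical point $c^\pm\in W^u(p)$ with $F^k(c^\pm)\in W^s_{loc}(q)$ for periodic $p,q$ that are homoclinically related. From this, the paper derives the sink by a single direct orbit-closing perturbation rather than a one-parameter unfolding: since $W^u(q)\pitchfork W^s(p)$ and $c^\pm\in W^u(p)$, the $\lambda$-lemma produces nearby critical points $c_n^\pm=(0^\pm,y_n)\to c^\pm$ with backward iterates $F^{-k_n}(c_n^\pm)\to F^k(c^\pm)$; one then perturbs $F$ locally near $F^{k-1}(c^\pm)$ so that $G^k(c_n^\pm)=F^{-k_n}(c_n^\pm)$, making $c_n^\pm$ a super-attracting periodic point of $G$. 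No intermediate-value argument, no family of quadratic-like return maps, no appeal to the classical Newhouse--Palis--Takens machinery is needed.

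What you identify as your ``main obstacle'' --- closing the orbit via a parameter and reducing to one-dimensional bifurcation theory --- is therefore avoidable in this model. Your outline would probably go through, but it introduces unnecessary analytic difficulty; the special structure (the critical line being a genuine line of zero horizontal derivative, and the horseshoe $\Lambda_{\e_m}$ being transitive so that the needed homoclinic relation is automatic) allows the paper's much shorter argument. One small point to tighten in your version: you should make explicit that the return of the shadowed orbit to a neighborhood of the critical line is guaranteed precisely because $c^+_{y_0}$ lies on $W^u_{loc}(\Lambda_{\e_m})$ and $q$ is homoclinically related to the periodic point whose unstable leaf carries $c^+_{y_0}$; without this your intermediate-value step has no geometric input to work with.
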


We start our argument with the following notion.

\begin{defi} We say that $F\in\mathcal{D}^r$ exhibits a ``homoclinic tangency'' if there is a hyperbolic periodic point $p$ of saddle-type of $F$ such that
\begin{enumerate}
 \item there exists $c^{+}=(0^{+},y_p)\in W^u(p)$;
\item there exists $k>0$ such that $F^k(c^+)\in W^s_{loc}(p)$ and $F^j(c^+)$ does not intersect the critical line for $0<j\leq k$;
%\item $W^s(p)-\{p\}$ and $W^u(p)-\{p\}$ intersect transversally.
\end{enumerate}
\end{defi}

The relevance of homoclinic tangencies becomes apparent in the next lemma.

\begin{lema}\label{l.sinks}Let $F\in {\mathcal D}^r$ with a homoclinic tangency. Then, there exists $G\in {\mathcal D}^r$ arbitrarily $C^r$ close to $F$ having an attracting periodic orbit disjoint from the critical line $\{x=0\}$ near the homoclinic tangency.
\end{lema}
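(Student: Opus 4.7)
The plan is to close the heteroclinic pseudo-cycle
$p\to c^\pm\to F^k(c^\pm)\in W^s_{loc}(q)\to q\to z\to p$
(where $z\in W^u(q)\pitchfork W^s(p)$ is the transverse heteroclinic point supplied by the third item of the definition of heteroclinic tangency) into a genuine periodic orbit by means of a localized $C^r$-small perturbation $G$ of $F$, and then to verify that the resulting orbit is automatically attracting by combining the uniform contraction $(K_{\mathrm{sgn}(x)})'(y)=1/k'(y)\in (0,1/\gamma)$ in the $y$-direction with the vanishing of $\partial_x f_y$ at the critical line $\{x=0\}$ (which holds because $0$ is the critical point of every $f_y$ in the unimodal family).

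For the construction I would first linearize $F^{\mathrm{per}(p)}$ and $F^{\mathrm{per}(q)}$ in small neighborhoods of the saddles $p$ and $q$, so as to control the local dynamics. Then, for large integers $N_1,N_2,N_3$, I consider pseudo-orbits that leave a small transversal $\Sigma$ to $W^u(p)$ at distance $\delta_0\asymp d(p,c^\pm)/\lambda_p^{N_1}$ from $p$, follow $W^u(p)$ for $N_1$ iterates into a small neighborhood of $c^\pm$, are carried by $F^k$ into a small neighborhood of $W^s_{loc}(q)$, pass near $q$ for $N_2$ iterates and leave along $W^u(q)$ to a small neighborhood of $z$, and finally follow $W^s(p)$ for $N_3$ iterates back to $\Sigma$. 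This defines a first-return map $\Pi_G\colon \Sigma\to\Sigma$ depending continuously on $G$. Applying a $C^r$-small bump perturbation of $F$ supported in a small box around $z$ (a region of uniform hyperbolicity, disjoint from the critical line and from any sinks one wishes to preserve) translates the image $\Pi_G(\Sigma)$ transversally inside $\Sigma$ by an arbitrary small amount; an intermediate-value argument then yields a fixed point $\alpha$ of $\Pi_G$, that is, a periodic point of $G$ of period $T=N_1+k+N_2+N_3$.

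To see that this periodic orbit $\mathcal{O}$ is a sink, I use the fact that, off the critical line, the Jacobian $DF(x,y)$ is upper triangular in the coordinate frame, with diagonal entries $\partial_x f_y(x)$ and $(K_{\mathrm{sgn}(x)})'(y)$; hence the two eigenvalues of $DG^T$ along $\mathcal{O}$ are the products of the corresponding diagonal entries along the orbit. The vertical eigenvalue is bounded above by $\gamma^{-T}$, hence well inside the unit disc. The horizontal eigenvalue has modulus bounded by $C\lambda_p^{N_1+N_3}\lambda_q^{N_2}\cdot |f_{y_p}'(x_*)|$, where $x_*$ is the $x$-coordinate of the orbit point passing closest to the critical line, $\lambda_p,\lambda_q$ are the unstable eigenvalues of $p$ and $q$, and $C$ absorbs the bounded derivative contributions from the $k$-step transition starting at $c^\pm$ and from the passage through the neighborhood of $z$. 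Since $0$ is a quadratic critical point of every $f_y$, we have $|f_{y_p}'(x_*)|=O(|x_*|)$ as $x_*\to 0$, so tuning the perturbation to drive $|x_*|$ below $(C\lambda_p^{N_1+N_3}\lambda_q^{N_2})^{-1}$ forces the horizontal eigenvalue strictly inside the unit disc as well, and $\mathcal{O}$ is a sink.

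The main obstacle is therefore the quantitative coupling between the two requirements, that $\Pi_G(\alpha)=\alpha$ and that $|x_*|$ be very small: both depend on the perturbation, and one must show that they can be satisfied simultaneously. The point is that as the amplitude of the bump perturbation varies through a one-parameter family, the $x$-coordinate $x_*$ of the arising periodic point varies continuously and passes through $0$ at a critical amplitude (corresponding to an orbit that exactly hits the critical line); choosing the amplitude slightly off this critical value produces a genuine periodic point with $|x_*|$ as small as one wishes, at the cost of replacing $N_1$ by a larger integer if necessary. The resulting $G$ is $C^r$-close to $F$ and the perturbation is localized away from the critical line and from any preassigned sinks, so the lemma is ready to be applied inductively in the proof of Proposition \ref{inf-prop}.
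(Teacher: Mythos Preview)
Your approach differs substantially from the paper's, and the eigenvalue analysis has a genuine gap.

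The paper's proof is much shorter: using the heteroclinic cycle and the $\lambda$-lemma, one finds \emph{critical points} $c_n^\pm=(0^\pm,y_n)$ with $c_n^\pm\to c^\pm$ and backward iterates $F^{-k_n}(c_n^\pm)$ accumulating on $F^k(c^\pm)$; a single localized perturbation then forces $G^k(c_n^\pm)=F^{-k_n}(c_n^\pm)$, so that $c_n^\pm$ itself is $G$-periodic. Since this periodic orbit passes \emph{through} the critical line, one factor in the horizontal eigenvalue is $\partial_x f_{y_n}(0)=0$, and the orbit is super-attracting. No quantitative eigenvalue estimate is needed at all.

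Your argument, by contrast, produces a periodic orbit that passes \emph{near} $c^\pm$ at $(x_*,y_*)$ with $x_*\neq 0$, and then tries to bound the horizontal eigenvalue by $C\lambda_p^{N_1+N_3}\lambda_q^{N_2}|x_*|$. The difficulty is that $N_2$ and $|x_*|$ are tightly coupled by the closing condition: after the critical passage the orbit lands at horizontal distance $O(|x_*|^2)$ from $W^s_{loc}(q)$, so one is forced to have $\lambda_q^{N_2}\asymp |x_*|^{-2}$. Substituting, your bound becomes of order $\lambda_p^{N_1+N_3}\,|x_*|^{-1}$, which \emph{diverges} as $|x_*|\to 0$ rather than tending to zero. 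The fix you sketch in the last paragraph (``$x_*$ passes through $0$ at a critical amplitude'') does not rescue this: for any fixed finite period the orbit cannot actually reach $x_*=0$ (the time near $q$ would be infinite), and allowing the period to grow only makes the expanding factors larger. In other words, a Newhouse-style estimate of this type would require an area-dissipation hypothesis on the saddles (something like $|\lambda_p\mu_p|<1$, $|\lambda_q\mu_q|<1$), which is \emph{not} assumed in the definition of heteroclinic tangency and need not hold for general $F\in\mathcal{D}^r$.

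The paper sidesteps this entirely by exploiting the special feature of the model: placing the periodic point exactly on $\{x=0\}$ kills the horizontal eigenvalue outright, so no dissipativity is required. If you want to repair your argument, the cleanest route is to arrange, by a small perturbation of the family $\{f_y\}$, that the closed orbit actually contains a point with $x$-coordinate equal to $0$---which is precisely the paper's construction.
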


\begin{proof}From the fact that $c^{+}\in W^u(p)$, it follows that there exists $c^{+}_n= (0^{+}, y_n)$ such that $F^{-k_n}(c^{+}_n)\to F^k(c^{+})$ (for some appropriate sequence $k_n$) and $c^{+}_n\to c^{+}=(0^+,y_{\infty})$. Indeed, since the unstable manifold of $p$ consists of horizontal segments (as one can check from the form of the maps $F\in\mathcal{D}^r$), we can apply Palis' inclination lemma to the vertical segments $\{0^+\}\times(y_{\infty}-1/n,y_{\infty}+1/n)$ containing $c^+$ to deduce that, for each $n\in\mathbb{N}$, there are $k_n\in\mathbb{N}$ such that $F^{-k_n}(\{0^+\}\times(y_{\infty}-1/n,y_{\infty}+1/n))$ intersect a $(1/n)$-neighborhood of $F^k(c^+)$. Thus, for each $n\in\mathbb{N}$, we can select $c_n^+=(0^+,y_n)\in \{0^+\}\times(y_{\infty}-1/n,y_{\infty}+1/n)$ such that $F^{-k_n}(c_n^+)$ belongs to a $(1/n)$-neighborhood of $F^k(c^+)$. In particular, $c_n^+\to c^+$ and $F^{-k_n}(c_n^+)\to F^k(c^+)$, as desired. 

Hence, we can take a $C^r$ small perturbation $H=H_n$ of $F$ such that $H^k(c_n^{+})=F^{-k_n}(c_n^{+})$ and $H=F$ along the orbit $F^{-j}(c_n^{+})$ for $j=1,\dots,k_n$ (provided that $n$ is large enough). In fact, since $F^k(c^+)$ and the orbit of $p$ don't meet the critical line, we can fix $\alpha>0$ such that, for all $n\in\mathbb{N}$, one has $F^j(c_n^+)\in([-1,-\alpha]\cup[\alpha,1])\times[0,1]$ for $j=1,\dots, k$ and $F^{-m}(c_n^+)\in([-1,-\alpha]\cup[\alpha,1])\times[0,1]$ for $m=1,\dots, k_n+k-1$. In particular, if we take a smooth bump function $\nu_{\alpha}:[-1,1]\to \mathbb{R}$ with $\nu_{\alpha}(x)=0$ if $|x|\geq\alpha$ and $\nu_{\alpha}(x)=1$ for $|x|\leq\alpha/2$, and if we set 
$$H(x,y)=\left(\left(1+(X_n-1)\nu_{\alpha}(x)\right)f(x,y), \left(1+(Y_n-1)\nu_{\alpha}(x)\right)K_{sgn(x)}(y)\right)\in\mathcal{D}^r,$$ 
where $X_n:=\frac{\pi_1(F^{-k_n-k+1}(c_n^+))}{\pi_1(F(c_n^+))}$, $Y_n:=\frac{\pi_2(F^{-k_n-k+1}(c_n^+))}{\pi_2(F(c_n^+))}$, $\pi_1(x,y)=x$ and $\pi_2(x,y)=y$, then we obtain a small $C^r$-perturbation of $F(x,y)=(f(x,y),K_{sgn(x)}(y))$ because $F^{-k_n}(c_n^+)$ is close to $F^{k}(c^+)$ and $c_n^+$ is close to $c^+$, so that $F^{-k_n-k+1}(c_n^+)$ and $F(c_n^+)$ are close to $F(c^+)$ and thus $X_n$ and $Y_n$ are close to $1$. Moreover, by definition, $H(c_n^+)=F^{-k_n-k+1}(c_n^+)$ and  $H=F$ on $\{|x|\geq\alpha\}\times[0,1]$. Since $F^{j}(c_n^+)$ belongs to $\{|x|\geq\alpha\}\times[0,1]$ for $j=-k_n-k+1,\dots, k$, we obtain that $H^k(c_n^{+})=F^{-k_n}(c_n^{+})$ and $H=F$ along the orbit $F^{-j}(c_n^{+})$ for $j=1,\dots,k_n$, and, thus, $c_n^{+}=H^{k+k_n}(c_n^{+})$ is a periodic point of $H$ of period $k+k_n$. Furthermore, using that $c_n^+$ is a critical point of 
$H$, the reader can check that the periodic point $c_n^+$ is attracting. Of course, this does not complete the argument because the attracting periodic point $c_n^+$ of $H=H_n$ belongs to the critical line, but this little inconvenience is easily overcome by slightly perturbing $H=H_n$ as follows. 

Let us write $H=H_n\in\mathcal{D}^r$ as $H(x,y)=(h(x,y), L_{sgn(x)}(y))$. Recall that the $H$-orbit of $c_n^+$ does not intersect the critical line except for $c_n^+=(0^+,y_n)$ itself: indeed, using the notation of the previous paragraph, $H^i(c_n^+)\in\{|x|\geq\alpha\}\times[0,1]$ for all $i=1,\dots, k+k_n-1$. Thus, if we denote by $\theta_l=\alpha/2l$, we see that, for all $l$ sufficiently large, we can slightly perturb the unimodal maps $h(x,y_n)$ near $x=0$ and $h(x,\pi_2(H^{k+k_n-1}(c_n^+)))$ near $x=\pi_1(H^{k+k_n-1}(c_n^+))$ such that the resulting map $G=G_l$ satisfy $G(\theta_l,y_n)=H(c_n^+)$, $G(H^{k+k_n-1}(y))=(\theta_l,y_n)$ and $G=H$ nearby the points $H^i(c_n^n)$, $i=1,\dots,k+k_n-2$. In particular, the resulting map $G=G_l\in\mathcal{D}^r$ has an attracting periodic point of period $k+k_n$ at $(\theta_l,y_n)$ whose $G$-orbit does not meet the critical line. This ends the proof.
\end{proof}

On the other hand, homoclinic tangencies are frequent inside $\mathcal{N}$.
\begin{lema}\label{l.hetero}Let $F\in {\mathcal N}$. Then, there exists $G$ $C^r$ close to $F$ exhibiting a homooclinic tangency.
\end{lema}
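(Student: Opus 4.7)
The basic idea is to exploit the persistent Cantor set intersection $\tilde K^s(F)\cap \tilde K^u(F)\neq\emptyset$ guaranteed by membership in $\mathcal{U}$ (from Step 2 of the proof of Theorem A), together with the density of periodic points in the hyperbolic basic set $\La_{\e_m}(F)$, in order to produce a heteroclinic configuration after a small $C^r$-perturbation $G$ of $F$.

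First I would pick a non-periodic critical point $c_y^+=(0^+,y)$, with $y\in K_0^t$, such that $F^2(c_y^+)\in W^s_{loc}(\La_{\e_m}(F))$; such a $c_y^+$ exists because by construction the intersection $\tilde K^s(F)\cap \tilde K^u(F)$ is non-empty (in fact a Cantor set). Since $\La_{\e_m}(F)$ is a hyperbolic basic set of product type, periodic points are dense in it, and the local stable/unstable manifolds of these periodic points form dense families of (nearly) vertical/horizontal segments inside $W^s_{loc}(\La_{\e_m})$ and $W^u_{loc}(\La_{\e_m})$. A small $C^r$-perturbation of the unimodal family $f_{\e_m}$, localized in a neighborhood of $x=0$, slides the critical value of the perturbed map without altering the dynamics on $\La_{\e_m}$; using this freedom I arrange $G^2(c_y^+)\in W^s_{loc}(q,G)$ for a chosen periodic point $q\in\La_{\e_m}(G)$.

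Next I need a periodic point $p\in\La_{\e_m}(G)$ whose horizontal unstable leaf contains $c_y^+$, i.e., whose $y$-coordinate coincides with $y$; this requires $y$ to be a periodic point of the Cantor map $k^t$. Periodic points of $k^t$ are dense in $K_0^t$, and the projection of $\tilde K^s\cap \tilde K^u$ onto the $y$-axis is itself a Cantor set of positive thickness, so by an additional small perturbation of the Cantor data $k$ (moving the endpoints $a$ and $b$) I can arrange that $y$ lies in this projection and is simultaneously periodic for the perturbed $y$-dynamics. This $y$-perturbation acts on coordinates independent of those modified in the previous $x$-perturbation, so the two can be carried out simultaneously, and by Newhouse's Gap Lemma together with the continuity of thickness (Remark~\ref{cont}), the intersection $\tilde K^s\cap \tilde K^u$ survives the whole process.

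For the resulting $G$, any periodic point $p=(p_1,y)\in\La_{\e_m}(G)$ with $p_1$ periodic for the $x$-dynamics satisfies $c_y^+\in W^u(p,G)$, yielding condition (1) of the heteroclinic-tangency definition. Condition (2) holds with $k=2$: for $j=1$ one has $F(c_y^+)=(1-\mu_m(y),k^t(y))$ with $x$-coordinate near $1$, far from the critical line $\{x=0\}$, while for $j\geq 2$ the orbit lies in the region $|x|\geq\e_m$ where $\La_{\e_m}$ sits. Condition (3), $W^u(q,G)\pitchfork W^s(p,G)$, is automatic because inside $\La_{\e_m}(G)$ the stable and unstable directions are uniformly transverse and $\La_{\e_m}(G)$ is a transitive horseshoe, so any two of its periodic points admit transverse heteroclinic intersections. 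The main obstacle is precisely the simultaneous realization of (a) $G^2(c_y^+)$ lying on the stable leaf of a periodic $q$ and (b) $y$ being periodic for the $y$-dynamics; this is overcome by the decoupling between the $x$- and $y$-data in the definition of maps in $\mathcal{D}^r$, which lets one separate the two required motions into independent small perturbations.
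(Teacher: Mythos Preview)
Your overall strategy coincides with the paper's: use the robust intersection $\tilde K^s(F)\cap\tilde K^u(F)\neq\emptyset$ guaranteed for $F\in\mathcal{U}$, exploit density of periodic points in the hyperbolic basic set $\La_{\e_m}(F)$, and then perturb to realize an exact heteroclinic tangency. Condition (3) is handled the same way in both arguments (any two periodic points of the transitive horseshoe are homoclinically related).

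The difference is in the order of operations, and your ordering creates an avoidable difficulty. You first fix a critical point $c_y^+$, perturb $f$ near $x=0$ to land $G^2(c_y^+)$ on $W^s_{loc}(q)$, and only \emph{afterwards} try to force the specific height $y$ to be periodic for the $y$-dynamics by perturbing $k$. Two issues arise here. First, your assertion that ``the projection of $\tilde K^s\cap\tilde K^u$ onto the $y$-axis is itself a Cantor set of positive thickness'' is unjustified: the Gap Lemma gives only $\tilde K^s\cap\tilde K^u\neq\emptyset$, not that the intersection (or its projection) is a Cantor set, let alone one of positive thickness. Second, the decoupling you invoke is not exact for a general $F\in\mathcal{U}$: the $x$-coordinate of $F^2(0^+,y)$ is $f_{K_+(y)}(f_y(0))$, which depends on $K_+(y)$, so perturbing $k$ can move $G^2(c_y^+)$ off $W^s_{loc}(q)$.

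The paper avoids this by reversing the order: starting from $x_1,x_2\in\La_{\e_m}(F)$ with $c^\pm\in W^u_{loc}(x_1)$ and $F^k(c^\pm)\in W^s_{loc}(x_2)$, it first picks periodic $p,q\in\La_{\e_m}(F)$ close to $x_1,x_2$ (density of periodic points), so that the critical point $c^\pm_{y_p}$ on $W^u_{loc}(p)$ already has $F^k(c^\pm_{y_p})$ close to $W^s_{loc}(q)$; a \emph{single} small perturbation of the unimodal family near $x=0$ then achieves $G^k(c^\pm_{y_p})\in W^s_{loc}(q)$, with no need to touch $k$ at all. Your argument is easily repaired along the same lines: simply choose $y$ periodic from the outset (periodic points of $k$ are dense in $K_0$), which makes the second perturbation and the dubious projection claim unnecessary.
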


\begin{proof}This is an immediate consequence of the construction of $\mathcal{N}$: given $F\in\mathcal{N}$, we can find $x_1, x_2\in\La_{\e_m}$ and a critical point $c^{+}\in W^u_{loc}(x_1)$ such that $F^k(c^{+})\in W^s_{loc}(x_2)$. Since the hyperbolic set $\Lambda_{\e_m}$ is transitive, we can find a periodic $p$ in $\La_{\e_m}$ close to $x_1$ so that some pieces of its local unstable and stable manifolds are close to the corresponding invariant manifolds of $x_1$ and $x_2$ (resp.). In this situation, after a proper small perturbation (similar to the ones performed during the proof of Lemma \ref{l.sinks} above), we can find $G$ $C^r$-close to $F$ such that $G\in\mathcal{D}^r$ has a homoclinic tangency involving $p$.
\end{proof}

Finally, the proof of the desired proposition follows from a direct combination of the two previous lemmas.

\vskip 3pt

\noi{\em Proof of Proposition \ref{inf-prop}.} It is proved by induction. Given $G_n\in {\mathcal N}_n$, we can use Lemma~\ref{l.hetero} to find $G_{n+1}$ $C^r$-close to $G_n$ keeping the same number $n$ of attracting periodic points of $G_n$ and such that $G_{n+1}$ has a homoclinic tangency. By Lemma~\ref{l.sinks}, we can unfold this tangency to create a new sink, i.e., we can find $H\in\mathcal{N}_{n+1}$ $C^r$-close to $G_{n+1}$. The result follows by Baire's theorem.
\qed

This completes the proof of Theorem A.

%%%%%%%%%%%%%%%%%%%%%%%%%%%%%%%%%%%%%%%%%%%%%%%%%%%%%%%%%%%%%%%%%%%%%%%%%%%%%%%%%%%%%%%
%%%%%%%%%%%%%%%%%%%%%%%%%%%%%%%%%%%%%%%%%%%%%%%%%%%%%%%%%%%%%%%%%%%%%%%%%%%%%%%%%%%%%%%
%%%%%%%%%%%%%%%%%%%%%%                                                       C^1 section                                                                             %%%%%%%%%%%%%%%%%%%%%%%%%%%%%%%%%%%%%%%%%%%%%%%%%%%%%%%%%%%%%%%%%%%%%%%%%%%%%%%%%%%%%%%%%%%%%%%%%%%%%%%%%%%%%%%%%%%%%%%%%%%%%%%%%%%%%%%%%%%%%%%%%%%%%%%%%%%%%%%%%%%%%%%%%%%%%%%%%%%%%%%%%%%%%%%%%%%%%

\section{ Proof of Theorem B}\label{s.B}

\vskip 3pt

Before giving the proof of Theorem B, we briefly outline the strategy. Given $\e>0$, let us take $U_\e=([-1,-\e]\cup [\e,1])\times [0,1]$
and
$$\La_\e=\Omega(F)\cap\bigcap\limits_{n \in \ZZ} F^n(U_\e).$$

\noi {\bf Strategy of the proof.}

\bigskip

\begin{enumerate}
\item For any $\e>0$, we show that, $C^1$-generically, the set
$\La_\e$ is composed by a locally maximal hyperbolic set and a finite number of
periodic attracting points. This is performed in Subsection
\ref{subhyp}  (see Theorem \ref{hyp} and Corollary \ref{c.loc-max}).
\item We show that, $C^1$-generically, any critical point either it is contained
in the basin of attraction of the sinks (of Step 1 above) or
it returns to $[-\e,\e]\times [0,1]$. This is performed in Subsection \ref{subret}.
\item Later, we produce a series of $C^1-$perturbations (of size proportional to $\e$) in the way to
create a finite number of periodic sinks such that their basins
contain the critical points that return. This is performed in
Subsection \ref{subpert}.
\item From items  $1$, $2$ and $3$, it follows that
$\Omega(F)\subset \La_\e\cup\{p_1,....,p_k\}$, where each $p_i$
is a periodic attracting point ($i=1,\dots,k$), and therefore it is
concluded that $\Omega(F)$ is hyperbolic (and $F$ is Axiom A).
\end{enumerate}

\bigskip

\noi {\bf Convention.} As it will become clear later, although our Theorem B concerns $\mathcal{D}^1$ with the $C^1$-topology, sometimes we will need some extra ($C^2$) regularity of the maps (for instance, this is the case when one tries to apply a 
fundamental ``hyperbolicity criterion for surface maps'' of Pujals and Sambarino). 
Therefore, during this \emph{entire} (last) Section~\ref{s.B},
we consider $\mathcal{D}^2$ equipped with the $C^1$-topology. In particular, each time we refer to ``a $C^1$-generic/typical $F\in\mathcal{D}^2$'', we mean a $F\in\mathcal{D}^2$ belonging to an appropriate $C^1$-open and $C^1$-dense subset of $\mathcal{D}^2$.

\subsection{Hyperbolicity of $\La_\e$}\label{subhyp}

\begin{teo}\label{hyp}
Let $\e>0$ be a positive constant. Then, there exists a $C^1$-open and dense subset $\mathcal{V}_{\e}\subset\mathcal{D}^1$ such that, for any $F\in\mathcal{V}_{\e}$, the set $\La_\e$ contains a finite number
of periodic attracting points and the complement of the basin of
attraction of them $\hat \La_\e$ exhibits a hyperbolic splitting
$T\hat \La_\e=E^s\oplus E^u$ such that $E^s$ is contracting, $E^u$ is
expanding (and, in fact, $E^u=\mathbb{R}\cdot(1,0)$).
\end{teo}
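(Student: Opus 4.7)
My plan is to follow the roadmap sketched at the beginning of this section: first exhibit a dominated splitting on $\La_\e$ coming from the upper-triangular form of $DF$, and then invoke theorem B of \cite{PS1} (Pujals-Sambarino) together with a $C^1$-genericity argument to upgrade the domination to hyperbolicity on the complement of the basins of attraction of the (finitely many) sinks that may sit inside $\La_\e$.

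The first and most substantive step is to construct the dominated splitting $T_{\La_\e}M=E^s\oplus E^u$ with $E^u=\RR\cdot(1,0)$. The key structural observation is that $DF$ is upper triangular in the canonical basis: since $K(x,y)=K_{\mathrm{sgn}(x)}(y)$ depends only on $y$ once the side of $x$ is fixed,
$$
DF(x,y)=\left(\begin{array}{cc} \partial_x f_y(x) & \partial_y f_y(x) \\ 0 & K'_{\mathrm{sgn}(x)}(y) \end{array}\right),
$$
so $(1,0)$ is automatically $DF$-invariant, and along any orbit in $\La_\e\subset U_\e$ one has the uniform bounds $|K'_{\mathrm{sgn}(x)}(y)|\le 1/\ga<1$ (from $|k'|>\ga$) and $|\partial_xf_y(x)|\ge c(\e)>0$ (since $|x|\ge\e$ keeps orbits at positive distance from the critical line). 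The transverse bundle $E^s(p)$ is then produced as the $DF$-invariant section of the cocycle on slopes $s\mapsto(\partial_xf_y\cdot s+\partial_yf_y)/K'_{\mathrm{sgn}(x)}$: after composing sufficiently many iterates along an orbit the backward version of this cocycle contracts uniformly (with rate controlled by products of $|K'/\partial_xf_y|$), and the usual graph-transform/contraction argument produces a well-defined invariant $E^s$ and, simultaneously, a dominated splitting $\|DF^N_{/E^s(p)}\|\cdot\|DF^{-N}_{/E^u(F^N(p))}\|\le\la<1$ for some uniform $N\ge 1$.

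With the dominated splitting in hand, I would apply theorem B of \cite{PS1}. Since $F\in\mathcal{D}^2$ is a $C^2$-diffeomorphism on any neighbourhood of $\La_\e$ disjoint from the critical line, the Pujals-Sambarino result decomposes the compact invariant set $\La_\e$ into a disjoint union of a hyperbolic set $\La_\e^h$, finitely many periodic sinks, finitely many periodic sources, and finitely many normally hyperbolic periodic simple closed curves on which some iterate of $F$ acts as an irrational rotation. Two observations simplify this decomposition: the uniform contraction $|K'|\le 1/\ga<1$ rules out sources (at every point one eigenvalue of $DF$ is already $<1$), and the $C^1$-generic hypothesis removes the normally hyperbolic irrational-rotation circles, since the presence of such a circle is not a $C^1$-robust feature (any such circle is destroyed by an arbitrarily small $C^1$-perturbation that first produces a rational rotation and then makes its periodic orbits hyperbolic). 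Therefore, for a $C^1$-generic $F$, $\La_\e=\La_\e^h\sqcup\{p_1,\ldots,p_k\}$ with each $p_i$ a hyperbolic periodic sink; the finiteness $k<\infty$ follows from compactness of $\La_\e$ and isolation of hyperbolic periodic points.

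Finally, removing from $\La_\e$ the open basins of attraction of the $p_i$ leaves exactly $\hat\La_\e=\La_\e^h$, which is compact, $F$-invariant, inherits the dominated splitting with $E^u=\RR\cdot(1,0)$, and is hyperbolic by Pujals-Sambarino---so $E^s$ is uniformly contracted and $E^u$ uniformly expanded, as required. The main obstacle I anticipate is the construction of $E^s$ in the regime $c(\e)<1/\ga$, where single-step cone invariance fails and one must exploit the compactness and invariance of $\La_\e$ to push a multi-step contraction argument through the upper triangular slope cocycle. A secondary delicate point is identifying an explicit $C^1$-residual subset of $\mathcal{D}^2$ on which the normally hyperbolic irrational-rotation circles from the Pujals-Sambarino decomposition are actually absent.
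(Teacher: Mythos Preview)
Your overall strategy (dominated splitting $+$ Pujals--Sambarino) matches the paper's, but there is a genuine gap in the first step that is more than the technical obstacle you flag at the end.

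You propose to construct a dominated splitting on \emph{all} of $\La_\e$ with $E^u=\RR\cdot(1,0)$, via a graph-transform on the slope cocycle, whose contraction is ``controlled by products of $|K'/\partial_x f_y|$''. But this is exactly the quantity $|D_n|/|A_n|$, and there is no reason it tends to $0$ uniformly on $\La_\e$: for small $\e$ one has $c(\e)<\la_0$, so $\La_\e$ can contain periodic sinks along whose orbits the product of $|f_x|$ is \emph{smaller} than the product of $|K_y|$. At such a sink $(1,0)$ is the \emph{more} contracted eigendirection, and a dominated splitting with $(1,0)$ playing the role of $E^u$ simply does not exist there. No ``multi-step'' cone argument can fix this, because domination is a pointwise rate comparison and it genuinely fails at those orbits. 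In other words, the sinks are not merely an output of the Pujals--Sambarino decomposition to be listed afterwards; they are an obstruction that must be removed \emph{before} you can even claim domination.

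The paper's missing ingredient is precisely this: Lemma~\ref{pocos}, proved via Pliss's lemma, shows that the set of $(z,w)\in\La_\e$ for which $|A_n(z,w)|$ fails to grow like $\la_1^n$ (with $\la_0<\la_1<1$) is contained in the basins of finitely many sinks. Only on the complement $\hat\La_\e$ does one have $|A_n|>\la_1^n$ for large $n$, and hence $|D_n|/|A_n|<(\la_0/\la_1)^n\to 0$; the cone/graph-transform argument then goes through there (this is the estimate~(\ref{estima}) and the cone computation following it). The uniform contraction of $E^s$ is obtained afterwards by a determinant argument. So the logical order is: Pliss $\Rightarrow$ finitely many sinks $\Rightarrow$ domination on $\hat\La_\e$ $\Rightarrow$ Pujals--Sambarino on $\hat\La_\e$; your order (domination on $\La_\e$ $\Rightarrow$ Pujals--Sambarino $\Rightarrow$ sinks) does not get off the ground.

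A minor remark: you invoke $C^1$-genericity to destroy the normally hyperbolic irrational-rotation circles. That works, but the paper gives a direct argument that needs no genericity: any closed invariant curve in $[-1,1]\times[0,1]$ has a horizontal tangent at some point; since $(1,0)$ is $DF$-invariant and the dynamics on the circle is minimal, the curve would be everywhere tangent to $(1,0)$, which is absurd. In the paper, genericity is used only to ensure all periodic points are hyperbolic (so that one may apply the version of \cite{PS1} with the saddle hypothesis and conclude finiteness of sinks).
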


The proof of this result uses the notion of dominated splitting and Theorem B in \cite{PS1}. Firstly, we revisit the definition of dominated splittings:

\begin{defi}A $f$-invariant set
$\Lambda$ has a \emph{dominated splitting} if we can decompose
its tangent bundle into two invariant subbundles $T_\Lambda M=E\oplus
F$ such that:
\begin{eqnarray}\label{spdom}\DD{n}{x}{}\le C\lambda^n, \mbox{ for all } x\in\Lambda, n\ge 0.\end{eqnarray}
\noi with $C>0$ and $0<\lambda<1.$
%In this case we say that $\La$
%has a {\it $(C,\la)-$dominated splitting}.
\end{defi}

Secondly, we recall that Pujals and Sambarino~\cite{PS1} proved that any compact invariant set
exhibiting dominated splitting of a generic $C^2$ surface diffeomorphism is hyperbolic:

\begin{teo}[\cite{PS1}]\label{teops1}
Let $f\in \textrm{Diff}^2(M^2)$ be a $C^2$-diffeomorphism of a compact surface $M^2$ and $\La\subset\Omega(f)$ a compact invariant set exhibiting a dominated splitting. Assume that all periodic points in $\Lambda$ are hyperbolic of saddle type. Then, $\La$ can be decomposed into a hyperbolic set and a finite number of normally hyperbolic periodic closed curves whose dynamical behaviors are $C^2$-conjugated to irrational rotations.
\end{teo}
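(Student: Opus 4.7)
The plan is to exploit that $\dim E=\dim F=1$ in order to build, through each point of $\La$, a pair of one-dimensional locally invariant curves tangent to $E$ and $F$, and then to rule out non-hyperbolic behavior by a Denjoy--Schwartz type argument in which the $C^2$ hypothesis is indispensable. First I would construct families of center-stable curves $W^{cs}_{loc}(x)$ tangent to $E$ and center-unstable curves $W^{cu}_{loc}(x)$ tangent to $F$, uniformly parametrized in $x\in\La$ and of uniform size. Even though $E$ need not be contracting, the domination hypothesis provides invariant cone fields around $E$ and $F$ and one can apply the graph transform (or the Hirsch--Pugh--Shub theory of plaque families) inside these cones to obtain such curves as $C^1$ graphs; the $C^2$ regularity of $f$ feeds into the bounded-distortion estimates needed in the next step.

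Next I would partition $\La$ according to the asymptotic behavior of $\|Df^n|_E\|$ and $\|Df^{-n}|_F\|$. Let $\Lambda^{-}$ be the set of points whose forward orbit does not exhibit uniform contraction along $E$, and $\Lambda^{+}$ the analogous set for $F$ under $f^{-1}$. On $\La\setminus(\Lambda^{-}\cup\Lambda^{+})$ the pointwise exponents together with domination force uniform contraction/expansion by a standard Pliss/Mañé-style averaging argument, producing the hyperbolic part of the decomposition. The core of the theorem is therefore the structural description of $\Lambda^{-}$ (and, by applying the result to $f^{-1}$, of $\Lambda^{+}$). The strategy is to show that any minimal invariant subset $K\subseteq\Lambda^{-}$ is supported on a single closed $C^1$ curve $\gamma\subset M$ tangent to $E$: concatenate neighboring arcs of the family $\{W^{cs}_{loc}(x)\}_{x\in K}$ and extract a limit; domination guarantees that these arcs fit together compatibly and that $\gamma$ is normally hyperbolic (the $F$-direction is expanded transversely to $\gamma$).

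On $\gamma$, the induced dynamics is a homeomorphism of the circle. The Denjoy--Schwartz lemma, which requires $C^2$ regularity to produce bounded-distortion estimates on deep returns to short intervals, forbids wandering intervals. Thus the restriction of $f$ to $\gamma$ has no wandering intervals and, since the hypothesis excludes any periodic point in $\La$ that is not a hyperbolic saddle (in particular, no periodic point on $\gamma$, which would have to be non-hyperbolic along $E$), the rotation number must be irrational, and Denjoy's theorem then yields a $C^2$ conjugacy to an irrational rotation. Finiteness of the collection of such periodic curves is obtained from the uniform size of the center-stable plaques together with the normal hyperbolicity of $\gamma$: two distinct such curves must be separated by a definite amount transversely, and compactness of $M$ closes the argument.

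The main obstacle will be the Denjoy--Schwartz step: one must simultaneously carry out the extraction of $\gamma$ as a $C^1$ limit of center-stable arcs and establish bounded-distortion estimates for the first-return map of $f|_\gamma$ to small subintervals. Both ingredients crucially rely on $C^2$ regularity of $f$ together with the domination, and the interplay between the two-dimensional geometry of the plaques and the one-dimensional Denjoy argument is the technical heart of the proof; everything else (hyperbolicity off $\Lambda^{\pm}$, normal hyperbolicity of $\gamma$, finiteness) is then a relatively standard consequence of the dominated splitting.
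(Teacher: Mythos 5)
This theorem is not proved in the paper---it is quoted verbatim from Pujals--Sambarino \cite{PS1} and used as a black box, so there is no internal proof to compare against. Your sketch is, nonetheless, a reasonable high-level outline of the strategy actually used in \cite{PS1}: locally invariant plaque families $W^{cs}$, $W^{cu}$ tangent to $E$ and $F$ obtained via the graph transform and the dominated cone fields, a Ma\~n\'e/Pliss-style dichotomy between points with uniform hyperbolic behavior and points where contraction along $E$ (or expansion along $F$) fails, and a Denjoy--Schwartz bounded-distortion argument (this is precisely where $C^2$ enters) showing that the bad set must be carried on finitely many normally hyperbolic invariant circles conjugated to irrational rotations.

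Two imprecisions worth correcting. First, Denjoy's theorem gives only a \emph{topological} conjugacy to the irrational rotation; the $C^2$ in the statement refers to the smoothness of the periodic curve itself, not to the conjugacy, which in general is merely a homeomorphism. Second, your justification of irrationality---that a periodic point on $\gamma$ ``would have to be non-hyperbolic along $E$''---does not stand as written: the actual obstruction is that a rational rotation number forces, via the Poincar\'e classification of circle maps, either a semi-stable (hence non-hyperbolic) periodic point of $f|_\gamma$, or a periodic point repelling along $\gamma$; the latter, combined with the transverse expansion coming from normal hyperbolicity, would be a source in $M$, contradicting the hypothesis that all periodic points in $\Lambda$ are saddles. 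Finally, the step ``concatenate neighboring arcs of $W^{cs}_{loc}$ and extract a closed curve'' is not a proof but a pointer to one: in \cite{PS1} the construction of the invariant curve from dynamically defined arcs, their returns, and the requisite distortion estimates is the technical heart of the argument and occupies most of a sixty-page paper. Your plan correctly identifies where the difficulty lives, but it does not discharge it, and the present paper has no obligation to, since it simply cites the result.
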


%We claim that it suffices to prove the next proposition in order to conclude the proof of Theorem \ref{hyp}:

%\begin{prop}\label{pdom}
%Let $\e>0$ be a positive constant. Then, for a $C^1$-generic $F\in\mathcal{D}^2$, the set $\La_\e$ contains a
%finite number of periodic attracting points, the complement of
%the basins of attraction of them $\hat \La_\e$ exhibits a dominated splitting
%$T\hat\La_\e=E^s\oplus F$ such that $E^s$ is contracting (after
%$n_0=n_0(\e)$ iterations), $F$ is spanned by $(1,0)$, and all periodic points in $\hat\La_\e$ are hyperbolic of saddle type.
%\end{prop}

%In fact, since it is immediate that there are no periodic closed curves inside $\La_\e$ whose dynamical behavior are conjugated to irrational rotations\footnote{Because in this case $F=\mathbb{R}\cdot(1,0)$ should be a tangent line of such closed curve $C$ at some point. Combining this fact with the minimality of the dynamics on $C$ and the continuity of dominated splitting (besides the invariance of $\mathbb{R}\cdot(1,0)$), we obtain that the whole curve $C$ is tangent to the line field $F$, a contradiction.}, we can put Proposition \ref{pdom} and Theorem \ref{teops1} together to get the hyperbolicity of $\La_\e$.

%Thus, we devote most of the rest of this subsection to the proof of this proposition. 
Let us begin the proof of Theorem \ref{hyp} with some useful notation. Given $(x_0,y_0)$, we denote by $(x_i, y_i):= F^i(x_0, y_0)$; also, we write the derivative of a map $F(x,y)=(f(x,y), K(x,y))$ of the form~(\ref{e.F}) as
$$DF = \left( \begin{array}{cc}
f_x & f_y \\
0 & K_y \end{array} \right).$$
In particular, it follows that
$$DF^n(x_0,y_0)= \left( \begin{array}{cc}
A_n & B_n \\
0 & D_n \end{array} \right),$$
where $$A_n:= \Pi_{i=0}^{n-1} f_x(x_i,y_i)
\,\,\,\, , D_n:= \Pi_{i=0}^{n-1} K_y(x_i,y_i),$$ $$B_n=
\sum_{j=0}^{n-1} f_y(x_j,y_j)\,\Pi_{i=0}^{j-1} K_y(x_i,y_i)\,
\Pi_{i=j+1}^{n-1} f_x(x_i,y_i).$$
In the sequel, we fix two positive constants $\la_0, \la_1$ such that $\la_0<\la_1 <1$ and $$|K_y|<\la_0.$$
Concerning the proof of Theorem \ref{hyp}, we observe that $(1,0)$ is an invariant direction by $DF$ and, moreover, it is the natural candidate to be the expanding one. Therefore, the existence of a dominated
splitting follows once we build up a proper invariant cone field around $(1,0)$. To perform this task, first we need the next lemma.

\begin{lema} \label{pocos}
Given $\e>0$, let $\mathcal{Z}_{\e}\subset\mathcal{D}^1$ be the $C^1$-open and dense subset consisting of $F\in\mathcal{D}^1$ such that, for each $y\in[0,1]$, the unimodal map $f(.,y)$ has no critical points in $|x|\geq\e$. Then, for any $F\in\mathcal{Z}_{\e}$, there exist a finite number of attracting periodic points with
trajectory in $\La_\e$ and a positive integer $n_0=n_0(\varepsilon)$ such that, for
any $(x_0, y_0)\in \La_\e$ outside the basins of
attraction of those periodic points, it holds
$$ |A_n|= \Pi_{i=0}^{n-1} \,|f_x(x_i,y_i)| > \la_1^n,$$
whenever $n>n_0$.
\end{lema}

In order to do not interrupt the flow of ideas, we postpone the proof of the lemma. Assuming momentarily this lemma, we are able to prove Theorem \ref{hyp}.

\vskip 1pt

\noi{\bf Proof of Theorem \ref{hyp}.}
Let $b$ be a positive constant such that $$|f_y| < b.$$ Given $F\in\mathcal{Z}_{\e}$, we can take $n_0$ the integer provided by Lemma \ref{pocos}
and let $R_0$ be a positive constant\footnote{Such a constant $R_0$ always exists since $f(x,y)\in \mathcal{U}^1$ is a family of unimodal maps without critical points in 
$|x|\geq\e$ for $F\in\mathcal{Z}_{\e}$ and $(x_i,y_i)\in U_\e$ implies $|x_i|\geq\e$.} such that, for any $m < n_0$ and any point $(x_0,y_0)\in\La_\e$, it holds
$$ \Pi_{i=0}^m |f_x(x_i,y_i)| > R_0^{-1}.$$
Now, for all $(x_0,y_0)\in\La_\e$ outside the basins of the attracting periodic points of Lemma~\ref{pocos}, let us bound $B_n$ for $n>n_0$:
\begin{eqnarray}\label{estima}
|B_n|&\leq& \sum_{j=0}^{n-1} |f_y(x_j,y_j)|\Pi_{i=0}^{j-1} |K_y(x_i,y_i)| \,
\Pi_{i=j+1}^{n-1} |f_x(x_i,y_i)|\\ \nonumber&=& \sum_{j=0}^{n-1} |f_y(x_j,y_j)|\cdot |D_j|\cdot\frac{|A_n|}{|A_{j+1}|}\\ \nonumber
&<& R_0 b |A_n|\frac{1}{1-\la_0} + b |A_n|\sum_{j=n_0}^{n-1}  \frac{\la_0^{j}}{\la_1^j} \\ \nonumber
&<&  R_0 b |A_n|\frac{1}{1-\la_0} + b |A_n|
\frac{1}{\lambda_1-\lambda_0}.
\end{eqnarray}
Using this estimate, we claim that the cone field 
$C(\ga_0):=C(\mathbb{R}\cdot(1,0),\ga_0):=\{(\dot{x},\dot{y})\in\mathbb{R}^2:|\dot{y}|\leq\ga_0|\dot{x}|\}$
is a forward invariant cone field for sufficiently small $\ga_0>0$. In fact, take $\ga_0>0$ small and let us consider $$v_n= DF^n(1,\ga)=(A_n+ \ga B_n, \ga D_n),$$
where $|\ga|<\ga_0$. The slope of $v_n$ with respect to $(1,0)$ is
$$|\textrm{slope}(v_n, (1,0))|= \frac{|\ga D_n|}{|A_n+ \ga B_n|}.$$
Note that the estimate~(\ref{estima}) implies
$$|A_n + \ga B_n| > |A_n| - \ga_0|B_n| > |A_n|\left(1- \ga_0 b(R_0 + 1)\cdot(\la_1-\la_0)^{-1} \right).$$
Hence, if $\ga_0$ is small so that
$$1- \ga_0 b(R_0 +1)(\la_1-\la_0)^{-1} > \frac{1}{2},$$
using Lemma~\ref{pocos}, we conclude that
$$\textrm{slope}(v_n, (1,0)) < \ga_0 \frac{2|D_n|}{|A_n|} < 2\left(\frac{\la_0}{\la_1}\right)^n\cdot\ga_0.$$
Thus, assuming $n_0$ large so that $(\la_0/\la_1)^{n_0}<1/4$ and taking
$\ga_1=  2(\la_0/\la_1)^{n_0}\ga_0$, we see that, for any $n>n_0$,
$$DF^{n}(C(\ga_0))\subset C(\ga_1)\subset C(\ga_0/2).$$ In other words, $C(\ga_0)$ is a forward invariant cone field and the existence of a dominated splitting $E^s\oplus\mathbb{R}\cdot (1,0)$ is guaranteed (over the set $\hat \La_\e$ of points outside the basins of the attracting points of Lemma~\ref{pocos}).

Next, we show that $E^s$ is uniformly contracted: for every $(x_0,y_0)\in\La_\e$, we fix $e_0^{(s)}=(u_0^s,v_0^s)\in E_{(x_0,y_0)}^s$ with $\|e_0^{(s)}\|=1$ and we put $DF^n(x_0,y_0)\cdot e_0^{(s)} := \pm\la_n^s\cdot e_n^{(s)}\in E_{(x_n,y_n)}^s$ where $e_n^{(s)}:=(u_n^{(s)},v_n^{(s)})\in E_{(x_n,y_n)}^s$ is an unitary vector. Then, we compute the determinant of $DF^n$:
$$|A_n\cdot D_n| = |\det DF^n| = \frac{|DF^n\cdot(1,0) \wedge DF^n\cdot e_0^{(s)}|}{|(1,0)\wedge e_0^{(s)}|}
= \frac{|A_n|\cdot |\lambda_n^{(s)}|\cdot |v_n^{(s)}|}{|v_0^{(s)}|},$$
where $|u\wedge v|$ denotes the area of the rectangle determined by the vectors $u$ and $v$. Because the direction $E^s$ does not belong to the cone field $C(\ga_0)$ and $|v_0^{(s)}|\leq \|e_0^{(s)}\|=1$, we get
$$|\lambda_n^{(s)}|=|D_n|\frac{|v_0^{(s)}|}{|v_n^{(s)}|}\leq \frac{1}{\ga_0}|D_n|.$$
Since $|D_n|\leq \lambda_0^n$ for all $n\in\mathbb{N}$, this proves that for all $F\in\mathcal{Z}\subset\mathcal{D}^1$ of the form~(\ref{e.F}) such that $|K_y|<\lambda_0$ and for any $\lambda_0<\lambda_1<1$, the set $\La_\e$ is the union of a finite number of sinks and a set $\hat\La_\e$ exhibiting a dominated decomposition $E^s\oplus F$ where $E^s$ is contracting (after $n_0$ iterates) and $F=(1,0)\cdot\mathbb{R}$ satisfies $DF^n(1,0)=(A_n,0)$ where $|A_n|>\lambda_1^n$ (for $n>n_0$). 

At this stage, the proof of Theorem \ref{hyp} is reduced to show that there exists a $C^1$-dense subset of $F\in\mathcal{D}^2\cap\mathcal{Z}_{\e}$ such that all periodic points in $\hat\La_\e$ are hyperbolic of saddle type. Indeed, this is true because it is immediate that there are no periodic closed curves inside $\La_\e$ whose dynamical behavior are conjugated to irrational rotations\footnote{Because in this case $F=\mathbb{R}\cdot(1,0)$ should be a tangent line of such closed curve $C$ at some point. Combining this fact with the minimality of the dynamics on $C$ and the continuity of dominated splitting (besides the invariance of $\mathbb{R}\cdot(1,0)$), we obtain that the whole curve $C$ is tangent to the line field $F$, a contradiction.}, so that, by Pujals-Sambarino Theorem \ref{teops1}, a dominated splitting over $\hat\La_\e$ is a hyperbolic splitting whenever all periodic points in $\hat\La_\e$ are hyperbolic of saddle type.

However, the $C^1$-denseness in $\mathcal{D}^1$ of $F\in\mathcal{D}^2\cap\mathcal{Z}_{\e}$ such that all periodic points in $\hat\La_\e$ are hyperbolic of saddle type is a consequence of a simple argument (compare with \cite[p.~966]{PS1}): recall that, by the ``usual'' transversality arguments, all periodic points of a $F\in\mathcal{D}^2\cap\mathcal{Z}_{\e}$ in a $C^2$-generic ($G_{\delta}$ dense) subset are hyperbolic\footnote{In our context of $F$ of the form (\ref{e.F}), the derivative $DF$ is an upper triangular matrix whose diagonal entries are the $x$-derivative of $f(x,y)$ and the $y$-derivative of $K_{\pm}(y)$. In particular, given any non-hyperbolic periodic point of $F$, we can slightly perturb 
$K_{\pm}$ to slightly change both the trace and the determinant of $DF$ to get an hyperbolic periodic point. Therefore, the set of $F\in\mathcal{D}^2$ whose periodic points of period $\leq n$ are hyperbolic are $C^2$-open and dense, and, hence, the set of $F\in\mathcal{D}^1$ whose periodic points are all hyperbolic form a $C^2$ $G_{\delta}$-dense.}\label{footnote-KS-1}; it follows that for such a $F\in\mathcal{D}^2\cap\mathcal{Z}_{\e}$, the compact invariant subset $\La_\e^{(0)}:=\La_\e-\{p\in\La_\e: p \textrm{ is a periodic sink}\}\subset\Omega(F)$ only contains hyperbolic periodic points of saddle type. Furthermore, $\La_\e^{(0)}$ admits a dominated splitting (since $\La_\e^{(0)}\subset\hat\La_\e$). Thus, we obtain from Theorem~\ref{teops1} that $\La_\e^{(0)}$ is a hyperbolic set. We claim that $P_\e(F):=\La_\e-\La_\e^{(0)}$ is finite (so that $\La_\e^{(0)}=\hat\La_\e$ and, \emph{a fortiori}, all periodic points of $\hat\La_\e$ are hyperbolic of saddle-type). Indeed, if $\# P_\e(F)=\infty$,     we have $\emptyset\neq \overline{P_\e(F)}-P_\e(F)\subset\La_\e^{(0)}$. However, since $\La_\e^{(0)}$ is hyperbolic, we can select a compact neighborhood $U$ of $\La_\e$ such that the maximal invariant of $U$ is hyperbolic. Thus, we get that, up to removing a finite number of periodic sinks, $P_\e(F)\subset U$, a contradiction with the hyperbolicity of the maximal invariant subset of $U$. This completes the proof of Theorem \ref{hyp}.\qed

Closing the proof of the hyperbolicity of $\La_\e$, we prove the statement of Lemma~\ref{pocos}.

\vskip 5pt

\noi{\bf Proof of Lemma \ref{pocos}.} It is enough to apply the
following lemma due to Pliss (see \cite{Pl}, \cite{M2}).

\begin{lema}[Pliss]
\label{pliss}
Given $0< \gamma_0 < \gamma_1 < 1 $ and $a>0$, there exist
$n_0=n_0(\gamma_0,\gamma_1,a)$ and $l=l(\gamma_0,\gamma_1,a)>0$ such
that, for any sequences of numbers $\{a_i\}_{0\leq i \leq n}$ with
$n_0<n$, $ a^{-1} < a_i < a$ and $\Pi_{i=0}^n  a_i< \gamma_0^{n}$, there are $1\leq n_1<n_2<\dots<n_r\leq n$ with
$r>ln$ and such that
$$  \Pi_{i=n_j}^k a_i < \gamma_1^{k-n_j} \,\,\,\,\, \textrm{ for all } \,\,\, n_j\leq k\leq n.$$
\end{lema}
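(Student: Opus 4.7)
The plan is to take logarithms and recast the statement as a counting problem about \emph{right-maxima} of a one-dimensional walk with bounded increments and strictly negative drift.

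Concretely, I would set $b_i := \log a_i$ (so $|b_i| < \log a$ and $\sum_{i=0}^n b_i < n\log\gamma_0$) and consider the tilted partial sums
\[
T_k := \sum_{i=0}^{k}(b_i - \log\gamma_1), \quad k = 0, 1, \dots, n, \qquad T_{-1} := 0.
\]
Setting $C := \log a + |\log\gamma_1|$ and $D := \log(\gamma_1/\gamma_0) > 0$, each increment satisfies $|T_k - T_{k-1}| < C$, while
\[
T_n < n\log\gamma_0 - (n+1)\log\gamma_1 = -Dn + |\log\gamma_1|.
\]
Taking logarithms, the desired conclusion $\Pi_{i=n_j}^k a_i < \gamma_1^{k-n_j}$ becomes $T_k - T_{n_j-1} < -\log\gamma_1$ for $n_j \leq k \leq n$, which in particular holds (with slack $-\log\gamma_1 > 0$) whenever $n_j - 1$ is a \emph{right-maximum} of $(T_k)$, i.e., $T_{n_j-1} \geq T_k$ for all $k \in \{n_j - 1,\dots,n\}$.

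To count such right-maxima, I would introduce the running maximum from the right,
\[
R_j := \max\{T_k : j \leq k \leq n\}, \qquad j = -1, 0, \dots, n.
\]
This sequence is non-increasing, $R_{-1} \geq R_0 \geq \cdots \geq R_n = T_n$. Each drop $R_{j-1} - R_j$ is either zero (when $T_{j-1} \leq R_j$) or equals $T_{j-1} - R_j \leq T_{j-1} - T_j < C$ (when $T_{j-1} > R_j$, in which case $j-1$ is automatically a right-maximum). Telescoping,
\[
\sum_{j=0}^{n}(R_{j-1} - R_j) = R_{-1} - T_n \geq T_{-1} - T_n = -T_n > Dn - |\log\gamma_1|,
\]
so the number of indices $j \in \{0, 1, \dots, n\}$ at which a strict drop occurs is at least $(Dn - |\log\gamma_1|)/C$. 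Discarding the index $j = 0$ if necessary, this gives at least $(Dn - |\log\gamma_1|)/C - 1$ right-maxima $m \in \{0,1,\dots,n-1\}$ of $(T_k)$, and the corresponding indices $n_j := m+1 \in \{1,\dots,n\}$ then satisfy the claim of the lemma. Choosing $n_0 = n_0(\gamma_0,\gamma_1,a)$ large enough, this count exceeds $l\cdot n$ with $l := D/(2C) > 0$.

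The argument is essentially a telescoping/pigeonhole estimate, so no substantial obstacle is expected; the main care goes into the logarithmic reformulation and verifying that right-maxima of $(T_k)$ yield the required \emph{strict} product inequality, which works precisely because the positive quantity $-\log\gamma_1$ provides slack to absorb the one-term discrepancy between the products and the tilted sums.
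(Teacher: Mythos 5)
The paper does not prove this lemma; it is stated and then applied, with a citation to Pliss and to Ma\~n\'e's book (\cite{Pl}, \cite{M2}). Your argument is correct and is essentially the standard proof of Pliss's lemma. The logarithmic reformulation is exact: with $T_k=\sum_{i=0}^k(\log a_i-\log\gamma_1)$ the required conclusion becomes $T_k-T_{n_j-1}<-\log\gamma_1$ for $n_j\le k\le n$, which is implied (with strict inequality, thanks to the positive slack $-\log\gamma_1$) by $n_j-1$ being a right-maximum of $(T_k)$; in particular the case $k=n_j$, which in the paper's normalization forces $a_{n_j}<1$, is covered since a right-maximum gives $a_{n_j}\le\gamma_1<1$. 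The counting via the running right-maximum $(R_j)$ is also right: each nonzero drop $R_{j-1}-R_j$ is bounded by the increment bound $C=\log a+|\log\gamma_1|$ and occurs exactly when $j-1$ is a right-maximum, while the telescoped total drop exceeds $Dn-|\log\gamma_1|$ with $D=\log(\gamma_1/\gamma_0)>0$; discarding the possible spurious index $m=-1$ and choosing $n_0$ so that $(Dn-|\log\gamma_1|)/C-1>Dn/(2C)$ for $n>n_0$ yields $r>ln$ with $l=D/(2C)$, all depending only on $\gamma_0,\gamma_1,a$. This is the same "Pliss times" argument one finds in the cited sources, so there is nothing genuinely different here to contrast, only the observation that you have supplied the proof the paper omits.
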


In fact, let us consider the set of points $(z,w)\in \La_\e$ such that
\begin{eqnarray}\label{non}
\liminf\limits_{n\to\infty} \frac{1}{n} \log |A_n(z,w)| < \log\sqrt{\la_1}.
\end{eqnarray}
Since $F\in\mathcal{Z}_{\e}$ and for any $(z_i,w_i)=F^i(z,w)\in\La_\e$, it holds $|z_i|\geq\e$, we have that the numbers $a_i=f_x(z_i,w_i)$ are uniformly bounded away from zero and infinity (i.e., $0<a(\e)^{-1}<a_i<a(\e)<\infty$), and, thus, we can use Lemma
\ref{pliss} twice to obtain that there exists a subsequence of forward iterates of $(z,w)$ accumulating on some point $(x_0,y_0)$ which has a subsequence of forward iterates
$$\{(x_{n_j},y_{n_j})\}_{j>0}=\{F^{n_j}(x_0,y_0)\}$$
such that any $(x_{n_j},y_{n_j})$ satisfies 
$$|A_n(x_{n_j},y_{n_j})| < \sqrt{\la_1^n},\,\,\, \forall \,\, n>0.$$
Using the same type of calculation of estimative~(\ref{estima}), we get, for any $j>0$,
$$\Pi_{i=0}^n ||DF(x_{i+n_j},y_{i+n_j})||< (1+b(\sqrt{\la_1}-\la_0)^{-1})\sqrt{\la_1^n},\,\,\, \forall \,\, n>0 \,\,  \textrm{large}.$$
By standard arguments it follows that, for any $\sqrt{\la_1}<\la_2<1$, there exists $\ga=\ga(\la_1,\la_2)$ such that
$$F^n(B_\ga(x_{n_j},y_{n_j}))\subset
B_{\la_2^n\ga}(F^n(x_{n_j},y_{n_j}))$$ for all $j,n>0$ large. Taking $q_0$ an accumulation point of $\{(x_{n_j},y_{n_j})\}$, it is not hard to see that
$$F^j(B_{\ga}(q_0))\subset
B_{\la^j_2\ga}(F^j(q_0))$$
for any $j>0$ large and there exists a positive integer $m=m(q_0)$ such that
$$F^m(B_{\ga}(q_0))\subset
B_{\la^m_2\ga}(q_0).$$
Therefore, it follows that:
\begin{enumerate}
\item there is an unique attracting periodic point\footnote{Actually, using that
$(x_{n_j},y_{n_j})=F^{n_j}(x_0,y_0)\to q_0$, it can be concluded that $q_0$
is the periodic point.} $p_0$ inside $B_{\ga}(q_0)$,
\item  the neighborhood  $B_{\ga}(q_0)$ is contained in the basin of attraction of $p_0$,
\item the point $(x_0,y_0)$ and the initial point $(z,w)$ verifying (\ref{non}) belong to the basin of attraction $p_0$;
\end{enumerate}
Since the number of attracting periodic point with local basin of attraction with radius
larger than $\ga$ is finite, we conclude that there are
a finite number of periodic attracting points whose basins contain the points of $\La_\e$ verifying (\ref{non}).
In other words, $\La_{\e}$ is the union of finitely many periodic sinks and the subset $\hat \La_{\e}$ of $\La_{\e}$ consisting of the points violating (\ref{non}). Note that $\hat\La_\e$ is compact because it lies in the complement of the basins of attraction of finitely many sinks. At this point, the proof of the lemma will be complete if we show that there exists $n_0=n_0(\e)\in\mathbb{N}$ such that $|A_n(z,w)|>\lambda_1^n$ for all $n>n_0$ and $(z,w)\in\hat\La_\e$. However, the existence of such an integer $n_0$ follows from Pliss lemma and the arguments of the previous paragraphs. Indeed, the non-existence of $n_0$ would imply that there exists a sequence $(z_i,w_i)\in\hat\La_\e$ and $n_i\in\mathbb{N}$, $n_i\to\infty$, such that $|A_{n_i}(z_i,w_i)|\leq\lambda_1^{n_i}$. Using Pliss' lemma (as in the previous paragraphs), we would be able to extract a subsequence of $(z_i,w_i)$ accumulating in some point $(z_{\infty}, w_{\infty})$ verifying (\ref{non}). Of course, this is a contradiction because $\hat\La_\e$ is compact (and thus $(z_{\infty},w_{\infty})\in\hat\La_\e$ and this concludes the proof of the lemma. \qed

For later use, we observe that the hyperbolic sets $\La_\e$ can be assumed to be locally maximal. This follows from the next claim (compare with \cite{A}).

\begin{afir}\label{basic} There exists a locally maximal hyperbolic set $\La_\e\subset \widetilde \La_\e\subset U_{\e/2}$. In other words, there exists and open set such $V\subset U_{\e/2}$ containing $\La_\e$ such that $\widetilde \La_\e:=\cap_{n\in \Z} F^n(V)$ is a compact invariant hyperbolic set.
 \end{afir}

\begin{proof}  Indeed, fix $\gamma=\gamma(\e)>0$ a positive small constant such that the local stable manifold $W_{\gamma}^s(p)$ of any point $p\in\La_{\e/2}$ is the graph of a real function of the $y$-coordinate defined over an interval of length $\delta=\delta(\e)>0$. Next, we take $k=k(\e)>0$ a large integer so that the lengths of the $2^k$ intervals $I_1^{(k)},\dots,I_{2^k}^{(k)}$ of the $k$th stage of the construction of the Cantor set $\mathcal{K}_0$ are $<\delta/2$. Note that we can suppose that $W_{\gamma}^s(p)\subset U_{\e/2}$ for any $p\in\La_{\e/2}\cap U_{3\e/4}$. Now, for each $j=1,\dots,2^k$, we consider the stable lamination $\mathcal{F}_{j,\pm}^s = \{W_\gamma^s(p)\cap [-1,1]\times I_j^{(k)}\}_{p\in\hat\La_{\e/2}\cap U_{3\e/4}}$. Given $\ell\in\mathcal{F}_{j,-}^s$, resp. $\ell\in\mathcal{F}_{j,+}^s$, we denote by $R_{j,-}^{(k)}(\ell)$, resp. $R_{j,+}^{(k)}(\ell)$, the rectangle delimited by the four lines $\{-1\}\times[0,1]$, $[-1,1]\times\partial I_j^{(k)}$ and $\ell$, resp. $\{+1\}\times[0,1]$, $[-1,1]\times\partial I_j^{(k)}$ and $\ell$. Given $\ell,\widetilde{\ell}\in\mathcal{F}_{j,\pm}^s$, we say that $\ell\prec\widetilde{\ell}$ if and only if $R_{j,\pm}^{(k)}(\ell)\subset R_{j,\pm}^{(k)}(\widetilde{\ell})$. Observe that $\prec$ is a \emph{total order}\footnote{Because any two distinct stable leaves are disjoint and $\partial\ell\subset[-1,1]\times\partial I_j^{(k)}$ for any $\ell\in\mathcal{F}_{j,\pm}^s$.} of $\mathcal{F}_{j,\pm}^s$. Thus, for each $j=1,\dots,2^k$, we can define $\ell_{j,\pm}\in\mathcal{F}_{j,\pm}^s$ the \emph{outermost} stable leaf of $\hat\La_{\e/2}\cap U_{3\e/4}\cap[-1,1]\times I_j^{(k)}$ as the unique leaf of $\mathcal{F}_{j,\pm}^s$ such that $\ell\prec\ell_{j,\pm}$ for all $\ell\in\mathcal{F}_{j,\pm}^s$. Consider the family of rectangles $R_{j,\pm}^{(k)}(\e):= R_{j,\pm}^{(k)}(\ell_{j,\pm})$. Finally, let $\widetilde\La_\e$ be the maximal invariant set associated to this family of rectangles. It follows that $\widetilde\La_\e$ has local product structure (because $W_{loc}^s(p)\cap W_{loc}^u(q)\in R_{j,\pm}^{(k)}(\e)$ when $p,q\in\widetilde\La_\e \cap R_{j,\pm}^{(k)}(\e)$) and $\La_\e\subset\widetilde\La_\e\subset U_{\e/2}$ (because $\La_\e\cap [-1,1]\times I_j^{(k)}\subset R_{j,\pm}^{(k)}\subset U_{\e/2}$). This proves our claim.
 \end{proof}

\begin{afir}\label{basicII} The set $\widetilde\La_\e$ found in Claim~\ref{basic} is the maximal invariant set of $U_\e\cup\widetilde{R_\e}$, where $\widetilde{R_\e}=\{R_{j,\pm}^{(k)}(\ell_{j,\pm})\}$ is the family of rectangles introduced in the previous claim. 
 \end{afir}
\begin{proof}
Indeed, given $z$ a point whose orbit $\mathcal{O}(z)$ stays in $U_\e\cup\widetilde{R_\e}$, we note that $z\in\Lambda_{\e/2}$ (since $U_\e\cup\widetilde{R_\e}\subset U_{\e/2}$). On the other hand, we have two possibilities:
\begin{itemize}
\item $\mathcal{O}(z)\subset\widetilde{R_\e}$: this means that $z\in\widetilde\Lambda_\e$;
\item there exists $y\in\mathcal{O}(z)-\widetilde{R_\e}$: this means that $y\in (U_\e\cap\Lambda_{\e/2})-\widetilde{R_\e}$, a contradiction (since, by definition, $U_\e\cap\Lambda_{\e/2}\subset U_{3\e/4}\cap\Lambda_{\e/2}\subset\widetilde{R_\e}$).
\end{itemize}
In particular, it follows that the positive orbit $\mathcal{O}^+(p)$ of every point $p\notin W^s(\widetilde{\La_\e})$ escapes any sufficiently small \emph{neighborhood} of $U_\e\cup\widetilde{R_\e}$. In fact, if the positive orbit of a given point $p$ stays forever inside a small neighborhood $W$ of $U_\e\cup\widetilde{R_\e}$, its accumulation points always belong to the maximal invariant set $\Lambda(W)$ of $W$. However, since the maximal invariant $\widetilde{\La_\e}$ of $U_\e\cup\widetilde{R_\e}$ is locally maximal (by Claim~\ref{basic}), $\Lambda(W)=\widetilde{\La_\e}$ for any small neighborhood $W$ of $U_\e\cup\widetilde{R_\e}$. Hence, $p\in W^s(\widetilde{\La_\e})$, a contradiction.
%Consequently, there exists a positive constant $L=L(\e)$ such that, for any $p_0=(x_0,y_0)$ satisfying $F^n(p_0)\in U_\e$ for all $n\geq 0$, we have $p_0\in W^s_L(\La_\e)\subset W_L^s(\widetilde\La_\e)= \bigcup\limits_{x\in\widetilde\La_\e}W^s_L(x)\subset \bigcup\limits_{x\in\La_{\e/2}}W^s_L(x)$ (where $W^s_L(\La)$ is the local stable manifold of size $L$ of $\La$).
\end{proof}

Before proceeding further, we use a fundamental result of C. G. Moreira to improve the geometry of the isolating neighborhood of $\widetilde{\La_\e}$.

\begin{teo}[\cite{M}]\label{gugu} Let $\mathcal{K}$ be a $C^2$-dynamically defined Cantor set and let $\widetilde{\mathcal{K}}$ be a $C^1$-dynamically defined Cantor set. Then, there are $C^1$-dynamically defined Cantor sets $\widehat{\mathcal{K}}$ arbitrarily $C^1$-close to $\widetilde{\mathcal{K}}$ such that $\mathcal{K}\cap\widehat{\mathcal{K}}=\emptyset$. In particular, generically in the $C^1$-topology,
a pair of $C^1-$dynamically defined Cantor sets are disjoint and the arithmetic difference of a $C^1$ generic pair of $C^1-$dynamically defined Cantor sets has empty interior (so that it is also a Cantor set).
\end{teo}

More precisely, combining our Theorem \ref{hyp} with this theorem, we have the following consequence:
\begin{cor}\label{c.loc-max}Fix $\e>0$. Then, for $F$ in a $C^1$-open and dense subset of $\mathcal{D}^1$, the maximal invariant set $\La_\e$ of $U_\e$ is a locally maximal hyperbolic set such that $\textrm{int}(U_\e)$ is an isolating neighborhood of $\La_\e$.
\end{cor}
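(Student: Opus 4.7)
The strategy is to strengthen the preliminary extension $\widetilde\La_\e\supseteq\La_\e$ from Remarks~\ref{basic} and~\ref{basicII} (a locally maximal hyperbolic set with isolating neighborhood $U_\e\cup\widetilde R_\e\subseteq U_{\e/2}$) by applying Moreira's Theorem~\ref{gugu} on the generic disjointness of $C^1$-dynamically defined Cantor sets. The reduction is that the corollary follows from the stronger statement $\widetilde\La_\e\subseteq\textrm{int}(U_\e)$ for $C^1$-generic $F\in\mathcal{D}^2$: since $\widetilde\La_\e$ is $F$-invariant and $\La_\e$ is by definition the maximal invariant set of $U_\e$, the containment $\widetilde\La_\e\subseteq\textrm{int}(U_\e)\subseteq U_\e$ forces $\widetilde\La_\e\subseteq\La_\e$, hence $\La_\e=\widetilde\La_\e$; the latter then inherits local maximality with $\textrm{int}(U_\e)$ as isolating neighborhood.

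To establish $\widetilde\La_\e\subseteq\textrm{int}(U_\e)$ generically, one must rule out points of $\widetilde\La_\e$ in the \emph{bad region} $\widetilde R_\e\setminus\textrm{int}(U_\e)$, which is contained in the thin vertical strips $\{3\e/4\le|x|\le\e\}\cap\bigcup_j[-1,1]\times I_j^{(k)}$ (bounded on the inside by the outermost stable leaves $\ell_{j,\pm}$ of Remark~\ref{basic} and on the outside by the boundary $\partial U_\e=\{|x|=\e\}$). By the local product structure of $\widetilde\La_\e$, its projections to appropriate transversals yield $C^{1+\alpha}$-dynamically defined Cantor sets that vary $C^1$-continuously with $F$. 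A point $p\in\widetilde\La_\e$ in the bad region corresponds to a nontrivial intersection of two such Cantor sets: the unstable Cantor set $K^u$ (the set of $y$-heights of horizontal unstable leaves of $\widetilde\La_\e$, dynamically defined via the $y$-dynamics induced by $K^\pm$) and a second dynamically defined Cantor set encoding where the stable lamination of $\widetilde\La_\e$ meets the bad strip.

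Apply Moreira's Theorem~\ref{gugu} to this pair: generically in the $C^1$-topology, these two $C^1$-dynamically defined Cantor sets are disjoint, which rules out the existence of any $p\in\widetilde\La_\e\cap(\widetilde R_\e\setminus\textrm{int}(U_\e))$. Combined with the analogous argument for points on the boundary $\partial U_\e$ (which is a codimension-$1$ genericity condition subsumed by the Moreira dichotomy once the bad region is redescribed as including $\{|x|=\e\}$), this yields $\widetilde\La_\e\subseteq\textrm{int}(U_\e)$ and completes the proof.

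The main obstacle is the precise construction of the second $C^{1+\alpha}$-dynamically defined Cantor set in the pair above---in particular, verifying that it arises as the maximal invariant set of a $C^{1+\alpha}$-expanding map with a Markov partition, and that its intersection with $K^u$ exactly captures the set of bad $y$-heights. A natural candidate is obtained by iterating backward, under the dynamics, the boundary structure of $\widetilde R_\e\setminus\textrm{int}(U_\e)$ (the outermost leaves $\ell_{j,\pm}$ together with the lines $\{|x|=\e\}$), producing a dynamically defined Cantor set whose intersection with $K^u$ records exactly the set of $y$-heights of forbidden orbits. Verifying that this construction is genuinely $C^{1+\alpha}$-dynamically defined, and that it varies $C^1$-continuously with $F\in\mathcal{D}^2$ so that Moreira's theorem transfers from the Cantor-set level to the level of dynamics, is the technical bulk of the remaining argument.
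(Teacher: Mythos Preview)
Your overall strategy---use Moreira's theorem to generically separate a pair of dynamically defined Cantor sets---is the same as the paper's, and your reduction $\widetilde\La_\e\subseteq\textrm{int}(U_\e)\Rightarrow\La_\e=\widetilde\La_\e$ is sound. But the proposal is genuinely incomplete: you explicitly leave open the construction of the ``second $C^{1+\alpha}$-dynamically defined Cantor set encoding where the stable lamination meets the bad strip,'' and the backward-iteration candidate you suggest is vague. Without this, Moreira's theorem has nothing to act on.

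The paper sidesteps this obstacle entirely by a preliminary $C^{1+\alpha}$ perturbation that \emph{linearizes the stable foliation} on each element $P_i$ of a fine Markov partition of $\widetilde{\La_{\e/2}}$: after this perturbation, $\widetilde{\La_{\e/2}}\cap P_i$ is literally an affine product $K_i^s\cdot(1,0)+K_i^u\cdot(\mu_i,1)$ of two dynamically defined Cantor sets. The bad condition ``the vertical $\{x=\pm\e\}$ meets $\widetilde{\La_{\e/2}}$'' then becomes the arithmetic condition $\pm\e\in K_i^s+\mu_i K_i^u$. Now no second Cantor set needs to be manufactured: one simply perturbs the horizontal dynamics $f_i$ (hence $K_i^s$) in $C^1$ and invokes Moreira's theorem in its arithmetic-difference form to force $\pm\e\notin\hat K_i^s+\mu_i K_i^u$. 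This is the key idea you are missing; it converts your ill-defined ``second Cantor set'' into a concrete affine image $\pm\e-\mu_i K_i^u$ of an already-existing one.
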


\begin{proof}Let $F\in\mathcal{V}_{\e/2}\cap\mathcal{Z}_{\e}\cap\mathcal{D}^2$ where $\mathcal{V}_{\e/2}$ is the $C^1$-open and dense subset of $\mathcal{D}^1$ verifying Theorem~\ref{hyp}. We consider a finite Markov partition $\{P_i\}_{i=1}^M$ of $\widetilde{\La_{\e/2}}$ with small diameter. We take $p_i\in P_i\cap\widetilde{\La_{\e/2}}$ and we define $E_i:=E^s(p_i)$. Since the stable foliation $W^s_i(x)=W^s_{loc}(x)\cap P_i$ of $F$ restricted to $x\in P_i$ is $C^1$-close to the foliation of $P_i$ by straight lines with direction $E_i$ when the diameter of the Markov partition is small, we can assume, up to performing a $C^{1}$-perturbation of the unimodal family $f(x,y)$, that the stable foliation of $F$ restricted to $P_i$ is the foliation by straight lines parallel to $E_i$. Indeed, if we consider the horizontal segments $\gamma^u_i$ forming the bottom unstable boundary of $P_i$ and we denote by $\{\Pi_i(x,y)\}=((x,y)+\mathbb{R}E_i)\cap\gamma_i^u$, we can define $g(x,y)$ as the $x$-coordinate of the point $\{(g(x,y), K_{sgn(x)}(y))\}=(F(\Pi_i(x,y))+\mathbb{R}E_j)\cap(\mathbb{R}\times\{K_{sgn(x)}(y)\})$ when $F(\Pi_i(x,y))\in P_j$ and a map $G(x,y)=(g(x,y), K_{sgn(x)}(y))$. Geometrically, $G$ sends the point $\{(x,y)\}=(\Pi_i(x,y)+\mathbb{R}E_i)\cap (\mathbb{R}\times\{y\})$ to $(F(\Pi_i(x,y))+\mathbb{R}E_j)\cap(\mathbb{R}\times\{K_{sgn(x)}(y)\})$ and $F$ sends $\{(\overline{x},y)\}=W_i^s(\Pi_i(x,y))\cap(\mathbb{R}\times\{y\})$ to $W_i^s(F(\Pi_i(x,y)))\cap(\mathbb{R}\times\{K_{sgn(x)}(y)\})$. Since the stable foliation $W_i^s$ is $C^1$ to the linear foliation $((x,y)+\mathbb{R}E_i)\cap P_i$, the map $G$ is $C^1$-close to $F$ and thus $G\in\mathcal{D}^1$ (i.e., $g(x,y)$ is unimodal in the $x$-variable) as $F\in\mathcal{Z}_{\e}$. So, from now on, let us suppose that the stable foliation of $F$ restricted to $P_i$ is the foliation by straight lines parallel to $E_i$.

Recall that the angle between the stable directions $E_i$ and the unstable (horizontal) directions is uniformly bounded away from zero. In particular, we also have a system of coordinates on each $P_i$ (given by the horizontal foliation and the foliation by lines parallel to $E_i$) where we can write $F|_{P_i}(x,y)=(f_i(x),K_{sgn(x)}(y))$ and $\widetilde{\La_{\e/2}}\cap P_i$ is a product of two dynamically defined Cantor sets, i.e., $\widetilde{\La_{\e/2}}\cap P_i = \mathcal{K}_i^s\cdot (1,0) + \mathcal{K}_i^u\cdot (\mu_i,1)$ with $\mathcal{K}_i^s$, $\mathcal{K}^u_i$ dynamical Cantor sets of the real line and $(\mu_i,1)\in E_i$.

In this context, the fact that the verticals $\{\pm\e\}\times [0,1]$ don't intersect $\widetilde{\La_{\e/2}}$ is equivalent to $\pm\e\notin \mathcal{K}_i^s+\mu_i\cdot \mathcal{K}_i^u$ for every $i=1,\dots, M$. However, this property can be achieved by a $C^1$-typical perturbation $\widehat{F}$ of $F\in\mathcal{D}^2$: by Moreira's theorem~\ref{gugu}, we can choose, for each $i$, a $(\hat{\mathcal{K}_i^s},\hat{f_i})$ $C^1$-dynamically defined Cantor set $C^1$-close to $(\mathcal{K}_i^s,f_i)$ so that $\pm\e\notin \hat{\mathcal{K}_i^s}+\mu_i\cdot \mathcal{K}_i^u$, and, consequently, $\widehat{F}|_{P_i}(x,y):=(\hat{f_i}(x), K_{sgn(x)}(y))\in\mathcal{D}^2$ has the desired property.
\end{proof}

\subsection{(Quasi) Critical points eventually return}\label{subret}
\begin{defi}Given $\e>0$, we call any point $(\pm\e,y)$ with $y\in \mathcal{K}_0$ a $\e$-quasi-critical point (or simply quasi-critical point).
\end{defi}

Now we use again C. G. Moreira's fundamental result (Theorem~\ref{gugu}) to show that,
for a $C^1$ generic $F\in\mathcal{D}^2$, any quasi critical point returns to the
``critical region''. In other words, roughly speaking, the next result states that
we can avoid in the $C^1$ topology the thickness obstruction (responsible for $C^2$ Newhouse phenomena).
\begin{lema}\label{return}Let $\e>0$ be a positive constant. Then, for $F$ in a $C^1$-generic ($G_{\delta}$-dense) subset of $\mathcal{D}^1$, there exists $m_0\in\mathbb{N}$ such that any quasi-critical point $(\pm\e,y)\in\{\pm\e\}\times \mathcal{K}_0$ satisfies
$$F^{m_y}(\pm\e,y) \in R_\e:=(-\e,\e)\times [0,1]\,\, and \,\, F^{m}(\pm\e,y) \notin (-\e,\e)\times [0,1],\,\, \forall \,\, 0< m < m_y$$
for some positive integer $m_y \leq m_0$ or it is contained in the basin of attraction of some of the (finitely many) attracting periodic points of Theorem \ref{hyp}.
\end{lema}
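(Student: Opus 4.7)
The plan is to establish a dichotomy for every quasi-critical point $(\pm\e, y)\in\{\pm\e\}\times K_0$: either its forward orbit eventually enters $R_\e=(-\e,\e)\times[0,1]$, or it stays in $U_\e$ for all future times. In the second case, corollary~\ref{c.loc-max} guarantees (for $C^1$-generic $F\in\mathcal{D}^2$) that $\La_\e$ is a locally maximal hyperbolic set with $\mathrm{int}(U_\e)$ as an isolating neighborhood, so the $\omega$-limit of $(\pm\e,y)$ is contained in $\La_\e$; by theorem~\ref{hyp}, the point must therefore lie either in the basin of one of the finitely many sinks or in the stable set $W^s(\hat\La_\e)$.

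To rule out the second alternative for $y\in K_0$, I would note that, since $\hat\La_\e$ has horizontal unstable direction, its stable foliation is transverse to each vertical line $\{\pm\e\}\times[0,1]$ (a property that is automatic for $C^1$-generic $F$) and thus carves out a Cantor set $\Sigma_\pm$ on this line. The $y$-projection $\mathcal{K}^s_\pm:=\pi_y(\Sigma_\pm)$ is a $C^1$-dynamically defined Cantor set in the sense of theorem~\ref{gugu}, since it is conjugate via $C^{1+\alpha}$ stable holonomies to the stable Cantor set of the horseshoe $\hat\La_\e$. Because $C^1$-perturbations of the unimodal family $f$ displace $\mathcal{K}^s_\pm$ essentially independently of $K_0$ (which depends only on the Cantor map $k$), Moreira's theorem~\ref{gugu} produces a $C^1$-dense set of $F\in\mathcal{D}^2$ for which $\mathcal{K}^s_\pm\cap K_0=\emptyset$; the continuous dependence of $\mathcal{K}^s_\pm$ and $K_0$ on $F$ in the Hausdorff topology (standard for stable laminations of locally maximal hyperbolic sets and for dynamically defined Cantor sets) together with their compactness makes this disjointness a $C^1$-open condition, hence $C^1$-generic.

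It remains to extract the uniform bound $m_0$. Fix $F$ in the generic set above, and let $B\subset K_0$ denote the set of $y$ such that $(\pm\e,y)$ lies in the basin of one of the (finitely many) sinks of theorem~\ref{hyp}; this $B$ is relatively open in $K_0$. For $N\ge 1$, let $S_N\subset K_0$ be the (open) set of $y$ satisfying $F^m(\pm\e,y)\in R_\e$ for some $1\le m\le N$, so that the dichotomy above forces $K_0\setminus B\subset\bigcup_N S_N$. By compactness of $K_0\setminus B$, there exists $m_0$ with $K_0\setminus B\subset S_{m_0}$; taking $m_y$ to be the first return time to $R_\e$ then yields the lemma. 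The main obstacle I foresee is the rigorous identification of $\mathcal{K}^s_\pm$ as an object in the class covered by theorem~\ref{gugu} and the verification that the restricted shape of perturbations inside $\mathcal{D}^2$ provides enough independent control on $\mathcal{K}^s_\pm$ to realize Moreira's genericity argument.
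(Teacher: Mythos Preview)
Your dichotomy in the first paragraph and the compactness extraction of $m_0$ in the last are both correct; the gap is in the middle step, and it is not the obstacle you flag at the end.

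You define $\Sigma_\pm$ as the trace of the stable foliation of $\hat\La_\e$ on $\{\pm\e\}\times[0,1]$ and describe it as conjugate by holonomy to the stable Cantor set of the horseshoe. This is only the (holonomy-extended) \emph{local} stable lamination. Showing $\mathcal K^s_\pm\cap K_0=\emptyset$ therefore rules out only those quasi-critical points already lying on $W^s_{loc}(\hat\La_\e)$. But the dichotomy in your first paragraph places a non-returning quasi-critical point in the \emph{global} stable set $W^s(\hat\La_\e)=\bigcup_{N\ge0}F^{-N}(W^s_{loc}(\hat\La_\e))$, and since $U_\e$ is not a Markov neighbourhood for $\hat\La_\e$, such an orbit may wander inside $U_\e$ for any number $N\ge1$ of iterates before reaching $W^s_{loc}$. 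The full trace $W^s(\hat\La_\e)\cap(\{\pm\e\}\times[0,1])$ is thus a countable union of Cantor sets, one for each first-hitting time $N$, and a single application of theorem~\ref{gugu} cannot separate $K_0$ from all of them simultaneously.

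The paper's proof is built around exactly this difficulty. It sets $\mathcal G_N=\{F:\bigcup_{k=0}^{N}F^k(\{\pm\e\}\times K_0)\cap W^s_{loc}(\hat\La_\e)=\emptyset\}$ and shows each $\mathcal G_N$ is $C^1$-open and dense by induction on $N$: at the inductive step one refines the Markov partition $\{P_i\}$ so that $F^j(\{\pm\e\}\times K_0)$ avoids every $P_i$ for $j<N$, projects the finitely many arcs $F^N(\{\pm\e\}\times I_l)$ along the straightened stable directions $E_i$ to obtain auxiliary Cantor sets $L_{b,i}$, and then invokes Moreira's theorem~\ref{gugu} to perturb the stable Cantor sets $K^s_i$ (equivalently, the local maps $f_i$ inside the Markov boxes) away from all the $L_{b,i}$. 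The generic set is the Baire intersection $\bigcap_N\mathcal G_N$. Your argument amounts to the base case $N=0$; what is missing is this induction handling arbitrarily long excursions inside $U_\e$ before the orbit meets $W^s_{loc}(\hat\La_\e)$.
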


\begin{proof}Take $F\in\mathcal{D}^1$ with the properties described during the proof of Corollary \ref{c.loc-max}. Since the maximal invariant set $\La_\e$ of $U_\e$ is the union of a finite number of periodic sinks and a hyperbolic set $\hat{\La}_\e$ of saddle type, we see that our task is equivalent to show that
$$\left(\bigcup\limits_{k\geq 0}F^k(\{\pm\e\}\times \mathcal{K}_0)\right)\bigcap W^s_{loc}(\hat{\La}_\e)=\emptyset$$
for a $C^1$-generic $F\in\mathcal{D}^1$. Keeping this goal in mind, given $N\in\mathbb{N}$, we define
$$\mathcal{G}_N:=\{F\in\mathcal{D}^1: \left(\bigcup\limits_{k=0}^{N}F^k(\{\pm\e\}\times \mathcal{K}_0)\right)\bigcap W^s_{loc}(\hat{\La}_\e)=\emptyset\}.$$
It follows that the proof of the lemma is complete once we show that $\mathcal{G}_N$ is $C^1$-dense (because it is clearly $C^1$-open). Observe that $\mathcal{G}_0$ is $C^1$ dense because $\La_\e$ is locally maximal with isolating neighborhood $U_\e$ for a $C^1$-typical $F$ (in view of Corollary~\ref{c.loc-max}). Assuming that $\mathcal{G}_{N-1}$ is $C^1$-dense for some $N\geq 1$, we claim that $\mathcal{G}_N$ is also $C^1$-dense. In fact, given $F\in\mathcal{G}_{N-1}\cap\mathcal{D}^2$ with the properties appearing in the proof of Corollary~\ref{c.loc-max}, we can refine the Markov partition $\{P_i\}_{i=1}^M$ so that $F^j(\{\pm\e\}\times \mathcal{K}_0)\cap P_i=\emptyset$ for every $0\leq j\leq N-1$. 

Next, for every $p\in\{\pm\e\}\times [0,1]$, we denote by $E(p)$ the tangent line of the $C^2$ curve $F^N(\{\pm\e\}\times [0,1])$ at the point $F^N(p)$. Note that $E(p)$ is a $C^1$ function of $p\in\{\pm\e\}\times [0,1]$. Therefore, since $\mathcal{K}_0$ is a $C^2$ dynamical Cantor set of Hausdorff dimension $HD(\mathcal{K}_0)<1$, we see that, without loss of generality, one can assume that the directions $E_i$ of the stable foliations (by parallel straight lines) of $P_i\cap W^s_{loc}(\hat{\La}_\e)$ don't belong to the set of directions $\{E(p): p\in\{\pm\epsilon\}\times \mathcal{K}_0\}$. Furthermore, by compactness, we can also fix a Markov partition $I_1,\dots, I_k$ of $\mathcal{K}_0$ of sufficiently small diameter so that the directions $E_i$ are still transversal to the finite collection of $C^2$ curves $F^N(\{\pm\e\}\times I_l)$ for every $i=1,\dots,M$ and $l=1,\dots,k$. At this stage, we write
$$P_i\cap F^N(\{\pm\e\}\times \mathcal{K}_0) = P_i\cap \bigcup\limits_{b=1}^{a(i)}F^N(\{\pm\e\}\times (\mathcal{K}_0\cap I_{l(b,i)}))$$
for an adequate choice of indices $l(b,i)\in\{1,\dots,k\}$, and we observe that, by transversality, the projection of each $F^N(\{\pm\e\}\times (I_{l(b,i)}\cap \mathcal{K}_0))$ along the direction $E_i$ gives a $C^{2}$ dynamical Cantor set $L_{b,i}$. Moreover, we note that $P_i\cap F^N(\{\pm\e\}\times \mathcal{K}_0)\cap W^s_{loc}(\hat{\La}_\e)\neq\emptyset$ if and only if $\mathcal{K}_i^s\cap(\cup_{b=1}^{a(i)}L_{b,i})\neq\emptyset$ (where $\mathcal{K}_i^s$ is the stable Cantor set introduced during the proof of Corollary~\ref{c.loc-max}). Using Moreira's theorem~\ref{gugu}, we obtain $(\widetilde{\mathcal{K}_i^s},\widetilde{f_i})$ dynamical Cantor sets $C^1$-close to $(\mathcal{K}_i^s,f_i)$ such that $\widetilde{\mathcal{K}_i^s}\cap(\cup_{b=1}^{a(i)}L_{b,i})=\emptyset$ for every $i$. It follows that $\widetilde{F}|P_i(x,y) := (\widetilde{f_i}(x),K_{sgn(x)}y)$ (in the linearizing coordinates inside each $P_i$) is $C^1$-close to $F\in\mathcal{G}_{N-1}$ and $\widetilde{\La}_\e\cap P_i = \widetilde{\mathcal{K}_i^s}\times \mathcal{K}_0$ (in the same linearizing coordinates). In particular, by construction, we have $\widetilde{F}\in\mathcal{G}_N$. This ends the argument.
\end{proof}

\begin{obs}In the previous statement, we deal with the returns to the critical strip of ``quasi-critical'' points $(\pm\e,y)$, $y\in \mathcal{K}_0$, instead of critical points $c^\pm_y$. The technical reason behind this procedure will be clear in the next section (when we perform the ``flatness'' perturbation to force critical points to fall into the basins of sinks).
\end{obs}

\subsection{Creating sinks whose large basins contain all critical points}\label{subpert}

\begin{lema}\label{flatness}For a $C^1$-open and dense subset of $F\in\mathcal{D}^1$, the critical points $c_y^{\pm}\in\{0^\pm\}\times \mathcal{K}_0$ belong to the union of the basins of a finite number of periodic sinks of $F$.
\end{lema}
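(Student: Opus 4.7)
The plan is to combine a $C^1$-small ``flatness'' perturbation of the unimodal family $\{f_y\}$ near the critical line $\{x=0\}$ with Lemma~\ref{return}, so as to reduce the control of every critical orbit to an induced expanding first-return map $T$ on the invariant Cantor set $K_0\times\{+,-\}$, and then to absorb the non-wandering set of $T$ into the basins of finitely many super-attracting periodic sinks of the perturbed map that arise automatically at the periodic $T$-orbits.

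Fix $\e>0$ small and take a $C^1$-generic $F\in\mathcal{D}^2$ satisfying Theorem~\ref{hyp}, Corollary~\ref{c.loc-max} and Lemma~\ref{return}. Because $f'_y(0)=0$ for every $y$, I would perturb $f_y$ to a unimodal map $\widetilde f_y$ satisfying $\widetilde f_y(x)\equiv f_y(0)$ on $[-\e,\e]$ (up to a negligible tilt preserving $0$ as the unique turning point) and $\widetilde f_y=f_y$ outside $[-2\e,2\e]$; since the perturbation has $C^1$-size of order $\e$, the resulting $\widetilde F\in\mathcal{D}^2$ still satisfies all the above results for $\e$ small enough. By construction, $\widetilde F(c_y^\pm)=\widetilde F(\pm\e,y)=(f_y(0),K_\pm(y))$, so the forward orbits of the critical point $c_y^\pm$ and of the $\e$-quasi-critical point $(\pm\e,y)$ coincide from time $1$ onward. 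Applying Lemma~\ref{return} to $\widetilde F$: for each $y\in K_0$, either $(\pm\e,y)$ enters the basin of a sink of $\widetilde\La_\e$ (in which case $c_y^\pm$ does too), or it returns to $R_\e=(-\e,\e)\times[0,1]$ within $m_0$ iterates at some $(x',y')$ with $y'\in K_0$. In the latter case, flatness on $[-\e,\e]$ forces $\widetilde F(x',y')=(f_{y'}(0),K_{sgn(x')}(y'))=\widetilde F(c_{y'}^{sgn(x')})$, so the orbit restarts at a new critical point; iterating, this defines an induced first-return map $T:E\to K_0\times\{+,-\}$ on the closed set $E$ of recurrent critical states, conjugate to a subshift of finite type on finitely many symbols (indexed by return times $\le m_0$ and signs).

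To finish, I would observe that every periodic $T$-orbit $(y_0,\s_0)\mapsto\cdots\mapsto(y_{p-1},\s_{p-1})\mapsto(y_0,\s_0)$ corresponds to a genuine periodic orbit of $\widetilde F$ through the point $\widetilde F(c_{y_0}^{\s_0})$, of period $\sum m_{y_j}$, whose orbit traverses the flat strip $\{|x|\le\e\}$ exactly $p$ times. Because the $x$-derivative of $\widetilde F$ is arbitrarily small on this strip and the $y$-direction is contracted by $|K_y|<\la_0<1$, this periodic orbit is automatically a (super-)attracting sink whose immediate basin, intersected with $\{+\e,-\e\}\times K_0$, contains a full cylinder around $(y_0,\s_0)$; in particular, $c_{y_0}^{\s_0}$ lies in this basin. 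The non-wandering set of the subshift $T$ being compact and covered by such cylinders, finitely many sinks suffice to absorb it, and a Baire-category argument over the $C^1$-open dense conditions ``every critical state is absorbed within $n$ returns'' then yields the required residual subset of $\mathcal{D}^2$. The main obstacle is two-fold: first, arranging the flatness perturbation while preserving both unimodality and the conclusions of Theorem~\ref{hyp} and Lemma~\ref{return} (handled by the slight asymmetric tilt near $0$ and by localizing the perturbation in a $2\e$-neighborhood of $\{x=0\}\times[0,1]$, disjoint from $U_\e$, so that $\widetilde\La_\e$ is preserved); second, ensuring that finitely many super-attracting sinks uniformly cover the non-wandering set of $T$, which should follow from the quantitative $x$-flatness (giving arbitrarily strong contraction along every periodic orbit through the flat strip) combined with the uniform $y$-contraction by $\la_0$.
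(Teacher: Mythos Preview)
Your approach is essentially the paper's: a flatness perturbation near the critical strip, combined with Lemma~\ref{return}, yields an induced dynamics on the critical region whose attracting cycles produce finitely many super-attracting sinks capturing every critical orbit. Two technical points are handled differently, and the paper's choices dissolve precisely the obstacles you flag at the end.

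First, the paper sets the flat value to $f(\pm\e,y)$ rather than $f_y(0)$, so the perturbed map $G$ equals $F$ \emph{exactly} on $U_\e$; hence the $F$-orbit segment of each quasi-critical point (which by Lemma~\ref{return} stays in $U_\e$ until return) is literally the $G$-orbit segment. This sidesteps your interpolation zone issue --- note that $[-2\e,2\e]\times[0,1]$ is \emph{not} disjoint from $U_\e$, contrary to your parenthetical claim, so your $\widetilde F$ may alter those orbit segments. Second, rather than inserting a ``negligible tilt'' to preserve unimodality during the flat step (which would spoil the exact equality $\widetilde F(c_y^\pm)=\widetilde F(\pm\e,y)$ you use), the paper accepts that the flat map $G\notin\mathcal{D}^1$, proves the sink structure for $G$ directly via a box-mapping argument (Markov boxes $[-\e,\e]\times I_l$ are sent strictly into one another after bounded time), and only at the very end slightly undoes the flatness to land back in $\mathcal{D}^2$. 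Both routes work, but the paper's ordering avoids the delicate simultaneity you identify as the main obstacle; your subshift/induced-map language is a correct but slightly more elaborate repackaging of the same finiteness argument.
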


\begin{proof}Fix $F\in\mathcal{D}^1$ be a $C^1$-generic map satisfying the properties of Lemma~\ref{return}. Given $\delta>0$, we will find a $C^1$-perturbation of $F$ with size $\delta$ whose critical points belong to the basins of finitely many periodic sinks. In this direction, we take $\e>0$ sufficiently small such that $|\partial_x f(x,y)|<\delta/2$ for every $|x|\leq\e$ and $y\in [0,1]$. Now, we perturb $F$ to make it ``flat'' in the critical strip $\overline{R_\e}:=[-\e,\e]\times [0,1]$, i.e., we define
$$g(x,y) = \left\{\begin{array}{rl}f(x,y) & \textrm{if }|x|\geq\e \\ f(\pm\e,y) & \textrm{if } |x|\leq\e\end{array}\right.$$
and $G(x,y):=(g(x,y),K_{sgn(x)}(y))$. Observe that, although $G\notin\mathcal{D}^1$ because $g(x,y)$ is not $C^1$, $G$ is $\delta/2$-close to $F$ in the Lipschitz norm and $G=F$ outside the critical strip $R_\e$. In particular, the pieces of orbits of $F$ and $G$ are equal while they stay outside $R_\e$. Hence, since $F$ satisfies Lemma~\ref{return}, we have that $G$ satisfies the same properties, namely, either its quasi-critical points $\{\pm\e\}\times \mathcal{K}_0$ return to the critical region $R_\e$ (after a bounded number of iterates) or they fall into the basins of finitely many periodic sinks (inside $\La_\e$). We claim that the quasi-critical points returning to $R_\e$ belong to the basins of finitely many  periodic \emph{sinks} of $G$. Indeed, by compactness and continuity, we can take a Markov partition $I_1,\dots,I_k$ of $K_0$ of small diameter and some integers $r_1,\dots,r_k$ so that every quasi-critical point $p\in\{\pm\e\}\times I_l$ return to $R_\e$ or fall into the basin of a sink after exactly $r_l$ iterates. Since the pieces of orbits of $F$ and $G$ outside $R_\e$ are the same, and the piece of the $G$-orbit outside $R_\e$ of a point $(x,y)\in R_\e$ equals to the piece of $F$-orbit outside $R_\e$ of the point $(\pm\e,y)$, we obtain that $G$ send the boxes $[-\e,\e]\times I_l$ strictly inside another (\emph{a priori} different box) $[-\e,\e]\times I_j$ or inside the basin of a periodic sink after $r_l$ iterates (exactly), so that our claim follows. Finally, we complete the proof by noticing that, although $G\notin\mathcal{D}^1$, one can \emph{slightly} ``undo'' the ``flat'' perturbation in order to get a $H\in\mathcal{D}^1$ such that its critical points belong to the basin of finitely many periodic sinks and $H$ is $\delta/2$-close to $G$ in the Lipschitz norm (and, \emph{a fortiori}, $H\in\mathcal{D}^1$ is $\delta$-close to $F\in\mathcal{D}^1$ in the $C^1$-topology).
\end{proof}

\subsection{End of the proof of Theorem B}

By combining Corollary~\ref{c.loc-max} and Lemma~\ref{flatness}, we get that the non-wandering set $\Omega(F)$ of a $C^1$-typical $F\in\mathcal{D}^2$ can be written as $\Omega(F)=\La_\e\cup\{p_1,\dots,p_k\}$ where $p_1,\dots,p_k$ are periodic sinks of $F$ whose (large) basins contain a $\e$-neighborhood of the critical set, i.e., $\Omega(F)$ is a hyperbolic set.

Thus,  the proof of Theorem B will be complete once we can show the following claim: a Kupka-Smale\footnote{In principle, we can not directly apply Kupka-Smale theorem in our context because $\mathcal{D}^1$ is a specific parameter space. However, it is not hard to see that the arguments in the proof of Kukpa-Smale theorem are still valid in our case. Indeed, the hyperbolicity of periodic points of a $C^1$-generic $F\in\mathcal{D}^1$ was verified in footnote 3, so it remains to check the generic transversality of invariant manifolds of periodic points, but this last task is ``automatic'' in our setting: we showed that the non-wandering set $\Omega(F)$ of a $C^1$-generic $F\in\mathcal{D}^1$ is the union of a hyperbolic set away from the critical line and a finite number of sinks whose basins cover the critical line. In particular, any point $q$ in the intersection of the stable and unstable manifolds of a hyperbolic periodic point $p$ of saddle-type does not approach the critical line and thus the horizontal direction $\mathbb{R}.(1,0)$ is invariant along the orbit of $q$. Hence, if the stable and unstable manifolds of $p$ meet non-transversely at $q$, we would deduce that the stable direction of $p$ is horizontal, a contradiction (as $p$ has saddle-type, so that the horizontal direction is unstable).} $F\in\mathcal{D}^1$ such that $\Omega(F)$ is hyperbolic is Axiom A. However, this is a consequence of the following argument of Pujals and Sambarino~\cite[p. 966]{PS1}: $\Omega(F)$ hyperbolic implies $L(F)$ hyperbolic, so that the results of Newhouse~\cite{N4} say that (1) periodic points are dense in $L(F)$ and we can do spectral decomposition of $L(F)$ into finitely many basic sets $L_1,\dots, L_k$, and (2) we have $\Omega(F)=L(F)$ whenever there is no cycle between the $L_i$. Hence, we can show that $\Omega(F)=L(F)$ whenever we can verify the no-cycles condition. Since our phase space is two-dimensional, a cycle can only occur among basic sets of saddle-type. However, since $F$ is Kupka-Smale, the intersections of invariant manifolds involved in this cycle are transversal, a contradiction.

\end{document}